\definecolor{aleacolour}{rgb}{0.09,0.32,0.44} 
\def\({\left(}
\def\){\right)}
\newcommand{\cE}{\mathcal{E}}
\newcommand{\sstar}{{s^*}}
\newcommand{\fraction}[1]{\{ #1 \}} 
\newcommand{\integer}[1]{\lfloor #1 \rfloor} 
\newcommand{\ceiling}[1]{\lceil #1 \rceil}
\newcommand{\sset}{\mathcal{AP}}
\newcommand{\fv}{{f^V}}
\newcommand{\fvv}[1]{{f_{#1}^V}}
\theoremstyle{plain}
\newtheorem{theorem}{Theorem}
\newtheorem{lemma}[theorem]{Lemma}
\newtheorem{claim}[theorem]{Claim}
\newtheorem{corollary}[theorem]{Corollary}
\newtheorem{conjecture}[theorem]{Conjecture}
\theoremstyle{definition}
\theoremstyle{remark}
\theoremstyle{property}
\title{Short proof of the asymptotic confirmation of the Faudree-Lehel Conjecture}
\author{
Jakub Przyby{\l}o\thanks{AGH University of Science and Technology, Faculty of Applied Mathematics, al. A. Mickiewicza 30, 30-059 Krakow, Poland. 
 Email: \href{mailto:jakubprz@agh.edu.pl} {\nolinkurl{jakubprz@agh.edu.pl}}.}
\and
Fan Wei\thanks{Department of Mathematics, Princeton University, Princeton, NJ 08544. Research supported by NSF Award DMS-1953958. Email: \href{mailto:fanw@princeton.edu} {\nolinkurl{fanw@princeton.edu}}.}
}
\date{}
\begin{document}
\maketitle  

\begin{abstract}
Given a simple graph $G$, the {\it irregularity strength} of $G$, denoted $s(G)$, is the least positive integer $k$ such that there is a weight assignment on edges $f: E(G) \to \{1,2,\dots, k\}$ 
for which each vertex weight $\fv(v):= \sum_{u: \{u,v\}\in E(G)} f(\{u,v\})$ is unique amongst all $v\in V(G)$.
In 1987, Faudree and Lehel conjectured that there is a constant $c$ such that $s(G) \leq n/d + c$ for all $d$-regular graphs $G$ on $n$ vertices with $d>1$, whereas it is trivial that $s(G) \geq n/d$. 
In this short note we prove that 
the Faudree-Lehel Conjecture holds when $d \geq n^{0.8+\epsilon}$ for any fixed $\epsilon >0$, with a small additive constant $c=28$ for $d$ large enough. Furthermore, we confirm the conjecture asymptotically by proving that for any fixed $\beta\in(0,1/4)$ there is a constant $C$  such that for all $d$-regular graphs $G$, $s(G) \leq \frac{n}{d}(1+\frac{C}{d^{\beta}})+28$, extending and improving a recent result of Przyby{\l}o that 
$s(G) \leq \frac{n}{d}(1+ \frac{1}{\ln^{\epsilon/19}n})$ 
whenever $d\in [\ln^{1+\epsilon} n, n/\ln^{\epsilon}n]$ 
and $d$ is large enough.
\end{abstract}

\section{Introduction}
Let $G$ be a simple graph with $n$ vertices. For a positive integer $k$ an edge-weighting function $f: E(G)\to \{1,2,\dots, k\}$ is called \emph{$k$-irregular} if the weighted degrees, denoted by $\fv(v) = \sum_{u\in N(v)} f(\{v,u\})$ are distinct for $v \in V(G)$; we will call $f(\{u,v\})$ and $\fv(v)$ simply the \emph{weights} of $\{u,v\}$ and $v$. The {\it irregularity strength} of $G$, denoted $s(G)$, is the least $k$, if exists, for which 
there is such a $k$-irregular edge-weighting function $f$; we set $s(G)=\infty$ otherwise.  It is easy to see that $s(G) < \infty$ if and only if $G$ has no isolated edges and at most one isolated vertex~\cite{FGKP}.

The irregularity strength was first introduced by Chartrand, 
Jacobson, Lehel,  Oellermann,  Ruiz, and Saba~\cite{CJLORS}. 
Later an optimal general bound $s(G) \leq n -1$ 
was proved in~\cite{AT,Nsingle}  for all graphs with finite irregularity strength  except for $K_3$. 
This bound occurred to be far from optimum for graphs with larger minimum degree. 
Special concern was in this context  devoted to $d$-regular graphs.
In~\cite{FL}
 Faudree and Lehel 
 showed $s(G) \leq \ceiling{n/2}+9$ for these. 
By a simple counting argument, it is easy to see that on the other hand, 
\[s(G) \geq \ceiling{(n+d+1)/d}.\] This lower bound motivated Faudree and Lehel to conjecture that $(n/d)$ is close to optimal, as proposed in~\cite{FL} in 1987.  In fact this conjecture was first posed by Jacobson, as mentioned in~\cite{L-survey}. 
\begin{conjecture}[\cite{FL}]\label{conj:main}
There is a constant $C>0$ such that for all $d$-regular graphs $G$ on $n$ vertices and with $d>1$, $s(G) \leq \frac{n}{d}+C$. 
\end{conjecture}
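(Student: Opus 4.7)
The plan is to attack Conjecture~\ref{conj:main} via a three-stage construction that combines a pre-assignment of target weights, a probabilistic placement with Lovász-Local-Lemma concentration, and a deterministic correction phase. Fix a $d$-regular graph $G$ on $n$ vertices and set $k=\lceil n/d\rceil + C$ for a constant $C$ to be calibrated. The goal is an $f\colon E(G)\to[k]$ with all $\fv(v)$ distinct. Before assigning weights, I would choose a set $T\subseteq\{d,d+1,\dots,dk\}$ of $n$ target values and a bijection $t\colon V(G)\to T$; since $|[d,dk]|\ge n+(C-1)d$, the targets can be spaced at gaps of roughly $1+Cd/n$. The bijection would respect the structure of $G$, e.g.\ by ordering vertices so that adjacent vertices receive targets that are far apart, which is possible because every vertex has only $d$ neighbours and $G$ is sparse relative to $T$.

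Next, I would choose $f$ randomly: each edge-weight drawn independently from a distribution on $[k]$ tailored so that $\mathbb{E}[\fv(v)]=t(v)$ for every $v$. Here one exploits the fact that a vertex has $d$ incident edges and $t(v)\in[d,dk]$, so the required mean is attainable. A concentration bound (Talagrand/Azuma) gives $\Prob(|\fv(v)-t(v)|>\eta d)\le e^{-\Omega(\eta^2 d)}$. Since $\fv(v)$ depends only on edges incident to $v$, the bad events form a dependency graph of degree $O(d^2)$, so for $d$ sufficiently large the Lovász Local Lemma yields a weighting with $|\fv(v)-t(v)|\le\eta d$ simultaneously for all $v$.

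The third step is a deterministic \emph{correction}: starting from the probabilistic $f$, perform a sequence of single-edge reweightings along short augmenting paths to drive each $\fv(v)$ exactly onto $t(v)$. The natural framework is to pair over- and under-weighted vertices through paths of length $O(1)$ and alternate $\pm 1$ adjustments on their edges; each path move corrects a unit of discrepancy at its endpoints without disturbing interior vertices. One would maintain a potential $\Phi(f)=\sum_v|\fv(v)-t(v)|$ and show it strictly decreases until $\Phi=0$, while every edge-weight stays in $[1,k]$.

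The main obstacle, and the reason the conjecture remains open in full generality, lies in the correction phase. In the asymptotic regime one may reserve slack of order $n/d^{1+\beta}$ between consecutive targets, which absorbs the residual errors handed over by the probabilistic step; but for the $O(1)$ additive conjecture, once $d=o(n)$ the target spacing is forced down to $1+O(d/n)$, so a correction that shifts $\fv(u)$ by $1$ at an intermediate vertex typically collides with another target. Avoiding such cascading collisions requires either (i) a global flow/matching argument that routes all $\Theta(\eta dn)$ unit corrections simultaneously through edge-disjoint short paths while honouring the $[1,k]$ cap, or (ii) a refined LLL/entropy-compression step that produces \emph{structured} slack aligned with a pre-planned correction scheme. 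Designing such a correction—i.e.\ reducing the $o(n/d)$ error term achieved in the asymptotic theorem of the paper all the way down to a constant—is where I expect genuinely new ideas to be required.
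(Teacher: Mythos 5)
The statement you were asked to prove is the Faudree--Lehel Conjecture itself, which remains open; the paper does not prove it in full generality either. What the paper proves is Theorem~\ref{thm:main1} (the conjecture for $d\geq n^{0.8+\epsilon}$, with additive constant $28$) and Theorem~\ref{thm:main2} (the asymptotic bound $s(G)\leq \frac{n}{d}(1+14/d^{\beta})+28$ for all $d$). Your proposal is a strategy sketch rather than a proof, and to your credit you correctly locate the bottleneck in the correction phase when $d=o(n)$. But even as a sketch there are unaddressed difficulties beyond the one you flag. First, the ``bijection $t$ respecting the structure of $G$'' is asserted but not constructed; ensuring adjacent vertices receive far-apart targets under a spacing of only $1+O(d/n)$ is itself a nontrivial combinatorial constraint. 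Second, choosing per-edge distributions so that $\mathbb{E}[\fv(v)]=t(v)$ for every $v$ is a system of $n$ linear constraints on the $nd/2$ edge means, subject to the box constraint $\mu_e\in[1,k]$; feasibility of this system for an arbitrary $d$-regular $G$ and arbitrary target bijection is not obvious and you do not argue it. Third, the augmenting-path correction is not developed far enough to know whether it works even in the dense regime $d\geq n^{0.8+\epsilon}$, which is precisely the regime where the conjecture \emph{is} known to be provable.

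The paper sidesteps all three of these issues with a different decomposition. Rather than pre-assigning a target to each vertex and tuning edge distributions to hit it, it randomly sorts $V(G)$ into $d$ bins via i.i.d.\ uniforms $X_v$, reserves the top $s^{*}\approx d^{1/2+\epsilon}$ bins as a small buffer set $S$, and gives the big set $B$ a simple deterministic initial weighting (the value on an edge $\{u,v\}$ depends only on the bins of $u$ and $v$). The targets for $B$ are then the consecutive integers determined by the sorted order, which removes the adjacency-collision worry automatically. The correction is not a path argument: each $v\in B$ has $\approx s^{*}$ edges into $S$, so its weight can be raised to its exact target by distributing a bounded increment over those edges, using the concentration from the Lov\'asz Local Lemma (Lemma~\ref{lem:main}) to keep the per-edge increment small. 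Distinctness within $S$ is then achieved separately by a Kalkowski--Karo\'nski--Pfender--style two-value algorithm partitioned over $13$ subblocks. It is the existence of the buffer set $S$ (small relative to $B$, but with enough edges to every vertex) that replaces your augmenting-path idea and makes the correction phase go through for dense $d$; for sparse $d$ the method still loses a multiplicative $1+O(d^{-\beta})$ factor, which is the open gap you correctly identified.
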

It is this conjecture that ``energized the study of the irregularity strength'', as stated in~\cite{CL}, and many related subjects throughout the following decades. It remains open 
 after more than thirty years since its formulation. A significant step forward towards solving it was 
achieved in 2002 by Frieze, Gould, Karo\'nski, and Pfender, who used the probabilistic method to prove the first linear bound $s(G) \leq 48(n/d)+1$ for $d \leq \sqrt{n}$, and a super-linear one $s(G) \leq 240(\log n)(n/d)+1$ in the remaining cases. 
The linear bound in $n/d$ 
was further extended to the case when $d \geq 10^{4/3}n^{2/3}\log^{1/3}n$ 
by  Cuckler and Lazebnik~\cite{CL}. The first general and unified linear bound in 
$n/d$ 
for the full spectrum of
$(n,d)$ was delivered by Przyby{\l}o~\cite{Plinear1, Plinear2}, who used a constructive rather than random 
approach to prove the bound $s(G) \leq 16(n/d)+6$.
Since then several works based on inventive new algorithms have been conducted to improve the multiplicative constant in front of $n/d$, see e.g.~\cite{Kthesis, KKP, MP}. 
The best result among these for any value of $d$ is due to 
Kalkowski, Karo{\'n}ski, and Pfender~\cite{KKP}, who showed that in general $s(G) \leq 6 \ceiling{n/\delta}$ for graphs with minimum degree $\delta\geq 1$ and without isolated edges. 
Only just recently it was proved by  Przyby{\l}o~\cite{Pasy}
that the Faudree-Lehel Conjecture holds asymptotically almost surely for random graphs $G(n,p)$ (which are typically ``close to'' regular graphs), for any constant $p$, and holds asymptotically (in terms of $d$ and $n$)  for $d$ not in extreme values. 
\begin{theorem}[Przyby{\l}o~\cite{Pasy}]\label{thm:previous}
Given any $\epsilon>0$, for every $d$-regular graph $G$ with $n$ vertices and $d\in [\ln^{1+\epsilon} n, n/\ln^{\epsilon}n]$, if $n$ is sufficiently large,
\[
s(G) \leq \frac{n}{d}\left(1+ \frac{1}{\ln^{\epsilon/19}n}\right).
\]
\end{theorem}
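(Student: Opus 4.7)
My plan is to use a two-stage probabilistic construction along the lines of Frieze--Gould--Karo\'nski--Pfender, refined to reach the tighter bound. Set $k := \lceil \frac{n}{d}(1+1/\ln^{\epsilon/19}n)\rceil$, fix an arbitrary ordering $v_1,\ldots,v_n$ of the vertices, and assign each $v_i$ a target weighted degree $t_i$ so that $d\le t_1<t_2<\cdots<t_n\le kd$ with consecutive gaps $\Delta := \lfloor(kd-d)/(n-1)\rfloor$; a short calculation gives $\Delta\ge 1+\Omega(1/\ln^{\epsilon/19}n)$ throughout the specified range of $d$, so the targets can be made distinct integers with a little slack above $1$.

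\textbf{Stage 1 (random weighting).} I would choose every edge weight independently and uniformly from a ``middle block'' of values inside $\{1,\ldots,k\}$, reserving some low and some high values for later correction. For each $v_i$, the random variable $\fv(v_i)$ is then a sum of $d$ bounded independent variables with mean near $kd/2$ and standard deviation $O(k\sqrt{d})$. Combining Chernoff/Azuma tail bounds with the Lov\'asz Local Lemma---the bad event at $v_i$ is $|\fv(v_i)-t_i|>R$, depends only on the $d$ edges at $v_i$, and two such events are dependent only when the corresponding vertices lie at distance at most $2$---would produce, with positive probability, a weighting satisfying $|\fv(v_i)-t_i|\le R$ simultaneously for all $i$, for some deviation $R$ of order $k\sqrt{d\ln n}$.

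\textbf{Stage 2 (deterministic correction).} For each $v_i$ I must shift $\fv(v_i)$ by exactly $\delta_i := t_i-\fv(v_i)\in[-R,R]$, using only the reserved weight budget on incident edges. I would model this as an edge-orientation/flow problem: each vertex demands a net change $\delta_i$, and each edge has bidirectional capacity derived from its reserved ``push-up'' and ``push-down'' levels. Because $G$ is $d$-regular with $d$ large compared to $R$ in the assumed range, a deterministic integral flow realising all demands exists---for instance via an iterative Eulerian-orientation or a greedy partial-matching argument---yielding a final $k$-irregular weighting with $\fv(v_i)=t_i$ for every $i$.

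\textbf{Main obstacle.} The crux is parameter calibration: the inter-target gap $\Delta$ is essentially $1$, whereas the Stage-1 deviation $R$ is of order $k\sqrt{d\ln n}$, many orders of magnitude larger than $\Delta$; Stage 2 must therefore execute shifts far exceeding a single target gap \emph{without} destroying the concentration that Stage 1 has bought us. Making this work forces a careful reservation of slack in Stage 1 and is what ultimately controls the exponent $\epsilon/19$. The boundary conditions $d\ge \ln^{1+\epsilon}n$ and $d\le n/\ln^\epsilon n$ are precisely where this trade-off closes: smaller $d$ breaks concentration in Stage 1, while larger $d$ drives $\Delta$ below $1$ so that distinct integer targets cannot be chosen in the first place.
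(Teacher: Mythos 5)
This theorem is cited from Przyby{\l}o~\cite{Pasy} and is not reproved in the present paper, but the paper's own construction (which strengthens it) shares a skeleton with~\cite{Pasy}, so a comparison is still informative. Your proposal has a genuine gap, and you flag it yourself without closing it. The Stage-1 fluctuation $R$ is of order $k\sqrt{d\ln n}$, while the target spacing $\Delta$ is essentially $1$, and your Stage-2 ``flow'' is not a flow in the usual sense: changing a single edge weight changes the weighted degrees of \emph{both} endpoints in the \emph{same} direction, so the system to solve is $Mx=\delta$ with $M$ the unsigned vertex--edge incidence matrix. This carries a global parity constraint on $\sum_i\delta_i$, is not amenable to an Eulerian-orientation or greedy-matching argument in the form you describe, and --- most importantly --- you give no mechanism preventing corrections routed through an intermediate vertex $w$ from pushing $\fv(w)$ out of its own window by an amount comparable to $R$. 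Since any local shift necessarily disturbs a neighbor, and the target windows have width $\approx 1$ while $R\gg 1$, ``careful reservation of slack'' would have to absorb $\Theta(R)$-sized shocks at every vertex simultaneously; nothing in the sketch does that, and this is precisely where the argument would fail.

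The route taken in~\cite{Pasy} (and in the present paper, via Lemma~\ref{lem:main} and Steps 1--3) is structurally different and is engineered to sidestep exactly this coordination problem. One partitions $V(G)$ into a large set $B$ and a small set $S$ with $|S|\approx(n/d)\cdot o(d)$; a random bin assignment controlled by the Local Lemma places the weighted degrees of $B$-vertices near an arithmetic progression with common difference $n/d$ (not $1$). The correction that turns these approximate positions into a contiguous block of integers is then supported \emph{only} on edges between $B$ and $S$: each $B$-vertex has about $s^*=\Theta(d^{1/2+\epsilon})$ neighbors in $S$, so the per-edge correction is tiny, and --- crucially --- the ``other side'' of every correction lands on $S$-vertices, whose weights are deliberately lifted far above those of $B$ and are later distinguished among themselves by a separate Kalkowski--Karo\'nski--Pfender-type pass on $E(S)$. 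That is, one does not try to hit $n$ unit-spaced targets by a global post-hoc correction; one isolates a small buffer set that absorbs all the corrections, and the size of that buffer is what governs the $\ln^{-\epsilon/19}n$ (here $d^{-\beta}$) error term.
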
 

In \cite{Pasy}, Przyby{\l}o moreover mentioned that ``a poly-logarithmic in $n$ lower bound on $d$ is unfortunately unavoidable" within his approach. 
In this paper we present an argument which is firstly quite short, secondly 
bypasses the mentioned poly-logarithmic in $n$ lower bound and extends the asymptotic bound to 
all possible cases
$1\leq d \leq n-1$ 
and thirdly, 
the upper bound we present is stronger than the one in Theorem~\ref{thm:previous} (where in particular $\ln^{\epsilon/19}n \ll \ln^{(1+\epsilon)/19} n \leq d^{1/19}$).

\begin{theorem}\label{thm:main2}
Given any $0< \beta< 1/4$, for every $d$-regular graph $G$ with $n$ vertices, if $d$ is sufficiently large in terms of $\beta$,
\[s(G) < \frac{n}{d}\left(1+ \frac{14}{d^{\beta}}\right)+28.\]
\end{theorem}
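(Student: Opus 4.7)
The plan is a two-phase edge weighting: a randomized base $f_1:E(G)\to\{1,\ldots,k_1\}$ whose vertex sums $f_1^V$ concentrate in a narrow window, then a deterministic correction $f_2:E(G)\to\{0,1,\ldots,k_2\}$ that pushes every vertex onto a distinct target. The final weighting $f_1+f_2$ takes values in $\{1,\ldots,k_1+k_2\}$, and I choose $k_1,k_2$ with $k_1\le n/d$ and $k_2\le 14n/d^{1+\beta}+28$ so that $k_1+k_2$ is at most the bound in Theorem~\ref{thm:main2}.

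\emph{Phase 1 (random base).} Sample $f_1(e)$ independently and uniformly in $\{1,\ldots,k_1\}$. Each $f_1^V(v)$ is the sum of $d$ i.i.d.\ bounded variables with mean $\mu=d(k_1+1)/2$ and standard deviation $\Theta(k_1\sqrt{d})$. Hoeffding's inequality together with a union bound gives, with positive probability, $|f_1^V(v)-\mu|\le L$ for every $v$, with $L=O(k_1\sqrt{d\log n})$. An Azuma-type martingale argument (changing one edge weight shifts the counter $N(I):=|\{v:f_1^V(v)\in I\}|$ by at most $2$) further yields, simultaneously for every integer interval $I$, an interval-count estimate $|N(I)-\mathbb{E}\,N(I)|=o(dk_2)$. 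Fix a realization of $f_1$ on which both events hold.

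\emph{Phase 2 (targets and correction).} Order the vertices so that $f_1^V(v_1)\le\cdots\le f_1^V(v_n)$ and choose distinct integer targets $T_1<\cdots<T_n$ inside an interval of length $2L+dk_2$ centered at $\mu$, with spacing that tracks the (approximately Gaussian) empirical density of the $f_1^V$'s. Combining the interval-count estimate with the fact that the target interval is longer than the concentration range of $f_1^V$ by $dk_2$, pairing $v_i\leftrightarrow T_i$ in the sorted order yields discrepancies $\Delta_i:=T_i-f_1^V(v_i)$ with $|\Delta_i|\le \tfrac12 dk_2$; the restriction $\beta<1/4$ is what makes $dk_2$ large enough to dominate both the Hoeffding tail of $f_1^V$ and the martingale fluctuations of the interval counts. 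After a global shift making every $\Delta_i\in[0,dk_2]$, the remaining task is to construct $f_2\in\{0,1,\ldots,k_2\}^{E(G)}$ with $f_2^V(v_i)=\Delta_i$: a degree-prescribed, edge-capacitated integer flow on $G$.

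The main obstacle is the existence of this integer correction $f_2$ for an arbitrary $d$-regular $G$. By integer max-flow/min-cut, feasibility reduces to the cut inequality $\sum_{v\in S}\Delta_v\le k_2\,|E(S,\overline{S})|$ for every $S\subseteq V(G)$, along with the parity condition $\sum_v\Delta_v\equiv 0\pmod{2}$ (easily repaired by shifting a single target by $\pm 1$). Since $dk_2$ exceeds each $|\Delta_v|$ by a factor of order $d^{\beta}$ there is ample average slack, yet small cuts of $G$ could in principle obstruct the flow; a short clean proof must therefore exploit the $d$-regularity together with the quantitative slack between $L$ and $dk_2$ to verify the cut condition uniformly. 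This is both the crux of the argument and the likely source of the $\beta<1/4$ restriction in the theorem.
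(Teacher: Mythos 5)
Your proposal is structurally different from the paper's and contains two gaps, one of which is fatal for the stated range of parameters.

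\emph{The union-bound concentration fails for small $d$.} The theorem asks for a bound valid whenever $d$ is sufficiently large \emph{in terms of $\beta$ only}; in particular $d$ may be a constant while $n\to\infty$. Your Phase~1 uses Hoeffding plus a union bound over all $n$ vertices, giving a concentration window $L=\Theta(k_1\sqrt{d\log n})$. For the correction budget you need $L\lesssim dk_2\approx n/d^\beta$, i.e.\ $d^{1/2-\beta}\gtrsim\sqrt{\log n}$, forcing $d\gtrsim(\log n)^{1/(1-2\beta)}$. That is exactly the ``poly-logarithmic in $n$ lower bound on $d$'' that the paper explicitly identifies as the barrier in Przyby{\l}o's earlier work and overcomes by replacing the union bound with the Lov\'asz Local Lemma: all of the paper's high-probability events depend only on a bounded-radius neighbourhood in $G$, so LLL yields a positive-probability outcome with tail thresholds independent of $n$. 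Your scheme has no analogue of this step, so it cannot reach the claimed range of $(n,d)$.

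\emph{The flow correction is not proved, and in fact cannot be proved by a uniform cut bound.} You reduce Phase~2 to finding $f_2\in\{0,\dots,k_2\}^{E(G)}$ with prescribed vertex sums $\Delta_v$, and you observe that this needs $\sum_{v\in S}\Delta_v\le k_2|E(S,\overline S)|$ for all $S$, plus per-component parity. You flag this as ``the crux'' but do not resolve it, and there is a genuine obstruction: a $d$-regular graph may be disconnected (e.g.\ a disjoint union of $K_{d+1}$'s) or have tiny edge cuts, in which case the right-hand side vanishes or is $O(1)$ while the global assignment of targets in sorted order gives no control over $\sum_{v\in S}\Delta_v$ on that side of the cut. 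Likewise, repairing parity ``by shifting a single target by $\pm1$'' must be done independently in each of possibly $\Theta(n)$ components, which can clash with the requirement that the targets be globally distinct. The paper avoids any global flow entirely: it adjusts $B$-vertices one at a time using only the $\Theta(d^{1/2+\epsilon})$ edges each has into the auxiliary set $S$ (a local, per-vertex budget guaranteed by LLL), and fixes the $S$-vertices by a sequential greedy/quantization algorithm (adapted from Kalkowski--Karo\'nski--Pfender), which again needs only local degree conditions rather than a graph-wide cut inequality.

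In short, your approach is a genuinely different route (random edge labels, sorted targets, global flow correction) from the paper's (random vertex partition via LLL, bin-indexed initial weights, local per-vertex corrections, then a KKP-style sweep over $S$), but as written it neither covers the full $(n,d)$ range nor establishes the existence of the correction, and both issues are precisely what the paper's design is built to sidestep.
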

%
\begin{corollary}\label{cor:main2}
Given any $0< \beta< 1/4$, there is a constant $C$ such that 
for every $d$-regular graph $G$ with $n$ vertices, $s(G) < \frac{n}{d}\left(1+ \frac{C}{d^{\beta}}\right)+28$.
\end{corollary}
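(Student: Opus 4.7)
The plan is to derive Corollary~\ref{cor:main2} from Theorem~\ref{thm:main2} by a simple patching argument. Theorem~\ref{thm:main2} already yields the desired bound with the absolute constant $14$ in place of $C$, but only once $d$ exceeds some threshold $d_0 = d_0(\beta)$ extracted from its proof. The only task left for the corollary is to handle the bounded range $2 \le d < d_0$, where a fixed multiplicative loss can safely be absorbed into the constant $C$ attached to $d^{-\beta}$, since $d^{-\beta}$ itself is then bounded below.

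For the small-$d$ regime I would invoke one of the known general linear bounds, for instance the result of Kalkowski, Karo\'nski, and Pfender~\cite{KKP} that $s(G) \le 6\lceil n/d \rceil \le 6(n/d) + 6$ for every graph of minimum degree at least $1$ with no isolated edges. The requirement
\[
6(n/d) + 6 \le \frac{n}{d}\left(1 + \frac{C}{d^\beta}\right) + 28
\]
rearranges to $\frac{C}{d^\beta} \ge 5 - \frac{22 d}{n}$, whose right-hand side is at most $5$. Since $d < d_0$ in this regime, any choice $C \ge 5 d_0^\beta$ works.

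Setting $C := \max\left(14,\, 5 d_0^\beta\right)$ then unifies the two regimes and yields the corollary for every admissible $d$; note that $d = 1$ is implicitly excluded because a $1$-regular graph with more than one edge has $s(G) = \infty$, consistent with Conjecture~\ref{conj:main}.

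There is no real obstacle here: the substantial work lives in Theorem~\ref{thm:main2}, and the corollary is essentially bookkeeping. The one point I would verify carefully is that the threshold $d_0$ produced by the proof of Theorem~\ref{thm:main2} depends only on $\beta$, not on $n$ or on the particular graph $G$, so that the resulting $C$ is genuinely a function of $\beta$ alone, as the statement demands.
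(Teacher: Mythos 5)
Your proposal is correct and matches the paper's own argument essentially verbatim: for $d \geq d_0(\beta)$ the bound follows from Theorem~\ref{thm:main2} with constant $14$, and for $d < d_0$ the paper likewise invokes the Kalkowski--Karo\'nski--Pfender bound $s(G) \leq 6\lceil n/d\rceil < (n/d)(1+5)+6$ and absorbs the constant $5$ into $C/d^\beta$ by choosing $C$ large in terms of $d_0$. Your closing concern about $d_0$ depending only on $\beta$ is also resolved in the paper (the thresholds arising in Lemma~\ref{lem:main} depend only on $\epsilon,\gamma$, and $n \geq d+1$ lets the one $n$-dependent inequality reduce to a pure lower bound on $d$).
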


The second contribution of this paper is a confirmation 
that the 
Faudree-Lehel Conjecture, i.e. Conjecture \ref{conj:main},
holds literally (not only asymptotically) for ``dense" graphs, i.e., whenever $d\geq n^{0.8+\epsilon}$
for any fixed $\epsilon>0$.

\begin{theorem}\label{thm:main1}
Given any $0< \beta < 1/4$, for every $d$-regular graph $G$ on $n$ vertices with $d^{1+\beta} \geq n$, if $d$ is sufficiently large in terms of $\beta$, then  \[s(G) < {n/d} + 28.\]
\end{theorem}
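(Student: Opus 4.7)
The plan is to derive Theorem~\ref{thm:main1} as an immediate corollary of Theorem~\ref{thm:main2}, by exploiting the freedom in the parameter $\beta$ of that theorem together with the density hypothesis $d^{1+\beta}\geq n$. The statement of Theorem~\ref{thm:main2} holds for \emph{any} fixed exponent in the open interval $(0,1/4)$; the idea is to apply it with an exponent strictly larger than the one appearing in the hypothesis of Theorem~\ref{thm:main1}, so that the multiplicative correction $\frac{14n}{d^{1+\cdot}}$ becomes a vanishing additive contribution under the density assumption.

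Concretely, given $\beta\in(0,1/4)$ from the hypothesis, I would pick an auxiliary exponent $\beta'\in(\beta,1/4)$, for instance $\beta'=\tfrac{1}{2}(\beta+\tfrac{1}{4})$. Applying Theorem~\ref{thm:main2} with $\beta'$ in place of $\beta$ gives, for $d$ sufficiently large in terms of $\beta'$ (and hence in terms of $\beta$),
\[
s(G) \;<\; \frac{n}{d} \;+\; \frac{14n}{d^{1+\beta'}} \;+\; 28.
\]
Now the density hypothesis $n \leq d^{1+\beta}$ yields $\tfrac{14n}{d^{1+\beta'}} \leq \tfrac{14}{d^{\beta'-\beta}}$. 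Since $\beta'-\beta>0$ is a fixed positive constant depending only on $\beta$, this correction term tends to $0$ as $d\to\infty$, and in particular can be made smaller than any prescribed $\delta>0$ by taking $d$ sufficiently large.

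Combining the two estimates gives $s(G) < n/d + 28 + \delta$ for arbitrarily small $\delta$ once $d$ is large. Because $s(G)$ is a positive integer, a short integrality argument then upgrades this to $s(G)\leq \lfloor n/d\rfloor + 28$, from which the claimed strict bound $s(G) < n/d + 28$ follows. I do not expect any real obstacle: all of the substance of the argument is already contained in Theorem~\ref{thm:main2}, and Theorem~\ref{thm:main1} is precisely the specialization of that quantitative statement to the dense regime where the density hypothesis $d^{1+\beta}\geq n$ is exactly what is needed to kill the error term, leaving only the additive constant $28$.
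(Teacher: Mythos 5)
Your plan to deduce Theorem~\ref{thm:main1} from Theorem~\ref{thm:main2} via integrality has a genuine gap. Applying Theorem~\ref{thm:main2} with $\beta'\in(\beta,1/4)$ and using $n\le d^{1+\beta}$ gives $s(G)<n/d+28+\eta$ with $\eta=14/d^{\beta'-\beta}$, which indeed tends to $0$. But $\eta$ decays only polynomially in $d$, while the gap between $n/d+28$ and the next integer above it is $1-\fraction{n/d}$, which can be as small as $1/d$ (take $n\equiv -1\pmod d$ with $n$ near $d^{1+\beta}$). In that case $\eta>1-\fraction{n/d}$, so the interval $[n/d+28,\,n/d+28+\eta)$ contains the integer $\integer{n/d}+29$, and nothing rules out $s(G)=\integer{n/d}+29>n/d+28$. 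Moreover, even when $\eta<1-\fraction{n/d}$, if $d\mid n$ then integrality only yields $s(G)\le n/d+28$, whereas the theorem asserts the strict inequality $s(G)<n/d+28$. So the claimed upgrade to $s(G)\le\integer{n/d}+28$ (and hence to $s(G)<n/d+28$) does not follow from the statement of Theorem~\ref{thm:main2}: that theorem has already traded the integer ceiling $\ceiling{n/d}$ for the looser $n/d+1$, and this loss cannot be undone by a generic integrality step.

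The paper instead derives both theorems from Claim~\ref{claim:maxweight}, which bounds the constructed weighting by $\|f\|_\infty\le\ceiling{n/d}+13\omega+\ceiling{10^3 n/d^{1+\epsilon-\gamma}}$. Under the density hypothesis $d^{1+\epsilon-2\gamma}\ge n$, one has $\omega=\max(\ceiling{n/d^{1+\epsilon-2\gamma}},2)=2$ and $\ceiling{10^3 n/d^{1+\epsilon-\gamma}}=1$ for $d$ large, giving $\|f\|_\infty\le\ceiling{n/d}+27$, which is always strictly less than $n/d+28$ since $\ceiling{x}<x+1$. This is where the strict inequality comes from; you would need to re-enter the construction at this point rather than black-boxing Theorem~\ref{thm:main2}.
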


\begin{corollary}\label{cor:main1}
Given any $0< \beta < 1/4$, there is a constant $C$ such that for every $d$-regular graph $G$ on $n$ vertices with $d^{1 + \beta} \geq n$, $s(G) < {n/d} + C$. 
\end{corollary}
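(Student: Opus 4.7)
My plan is to deduce Corollary~\ref{cor:main1} from Theorem~\ref{thm:main1} by absorbing the finitely many ``small-$d$'' exceptions into the constant $C$. Let $d_0 = d_0(\beta) \geq 2$ be the threshold guaranteed by Theorem~\ref{thm:main1}. For $d$-regular graphs $G$ satisfying $d \geq d_0$ and $d^{1+\beta} \geq n$, Theorem~\ref{thm:main1} already yields $s(G) < n/d + 28$, so nothing remains to be shown in that range, and $C = 28$ suffices there.

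The heart of the reduction lies in the observation that the hypothesis $d^{1+\beta} \geq n$ is restrictive precisely when $d$ is small. Concretely, if $d < d_0$, then $n \leq d^{1+\beta} < d_0^{1+\beta} =: N_0$, a constant depending only on $\beta$. Therefore only finitely many graphs (on fewer than $N_0$ vertices) remain. As in Conjecture~\ref{conj:main}, we implicitly take $d \geq 2$ so that $s(G) < \infty$, since otherwise matchings or edgeless graphs on three or more vertices would already violate the conclusion for trivial reasons. For every such remaining graph, the universal bound $s(G) \leq n-1$ of~\cite{AT, Nsingle} gives $s(G) \leq N_0 - 1$.

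I would then finish by taking $C := \max\{28, N_0\}$. For $d \geq d_0$ the desired inequality follows from Theorem~\ref{thm:main1}, and for $2 \leq d < d_0$ we obtain $s(G) \leq N_0 - 1 < n/d + C$ since $n/d \geq 1$. The entire argument is a routine finiteness reduction: all of the substantive work sits in Theorem~\ref{thm:main1}, and I do not foresee any real obstacle here beyond noting explicitly that the density hypothesis $d^{1+\beta} \geq n$ forces $n$ to remain bounded whenever $d$ does.
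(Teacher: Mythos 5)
Your proposal is correct and follows essentially the same route as the paper: apply Theorem~\ref{thm:main1} for $d\geq d_0$, then note that $d^{1+\beta}\geq n$ bounds $n$ whenever $d<d_0$, so only finitely many graphs remain and can be absorbed into $C$. The paper handles the exceptional cases via the Kalkowski--Karo\'nski--Pfender bound $s(G)\leq 6\lceil n/\delta\rceil$ rather than $s(G)\leq n-1$ (which, as stated in the paper, technically excludes $K_3$), but this is a cosmetic difference since that single graph is also trivially absorbed into $C$.
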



We remark that similar conclusions as the ones above can also be derived from~\cite{FWJP1}, 
which describes on almost 30 pages a very long, multistage and technically complex random construction yielding general results for all graphs (not only regular graphs). Taking into account that Conjecture \ref{conj:main} remains 
a central open question  
of the related field, 
cf.~\cite{FWJP1} for more comprehensive exposition of the history and relevance of this problem, 
we decided to present separately this very concise argument concerning the conjecture itself, 
which is also dramatically 
easier to follow. 
Moreover, the present proof is a local lemma based argument, and thus is very  different from the one in~\cite{FWJP1}, which might also be beneficial for further research. Another note is that the current proof is somewhat using the full power of the local lemma, where the symmetric version of local lemma is insufficient. 
%
%
%
Lastly, unlike in~\cite{FWJP1}, we also provide a specific additive constant in the obtained bounds for regular graphs, in particular in Theorem~\ref{thm:main1}, which is relatively small.

\section{Proof of main results} 
\subsection{Preliminaries}
For a set $U \subset V(G)$ and a vertex $v \in V(G)$, we use $\deg_U(v)$ to denote the number of neighbors of $v$ in $U$. 
For a positive constant $x$, let $\fraction{x}$ stand for $x - \lfloor x \rfloor$. 
We will use the following tools.
\begin{lemma}[Chernoff Bound]\label{bound:chernoff}
Let $X_1, \dots, X_n$ be i.i.d.~random variables such that $\Pr(X_i=1) = p$ and $\Pr(X_i=0)=1-p$ for each $i$. Then for any $t \geq 0$,
\begin{align*}
\Pr\left(\left|\sum_{i=1}^n X_i - np\right| > t\right) \leq 2e^{-t^2/(3np)}, \ \ & \text{ for } 0 \leq t \leq np, \\
\Pr\left(\left|\sum_{i=1}^n X_i - np\right| > t\right) \leq 2e^{-t/3}, \ \ & \text{ for } t > np.
\end{align*} 
\end{lemma}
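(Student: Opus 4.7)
The Chernoff bound is a classical estimate, and the natural plan is to reproduce the standard Cram\'er--Chernoff moment-generating-function argument. Write $S := \sum_{i=1}^n X_i$, so that $\mathbb{E}[S] = np$. For any $\lambda > 0$, Markov's inequality applied to $e^{\lambda S}$, combined with independence of the $X_i$ and the Bernoulli MGF identity $\mathbb{E}[e^{\lambda X_i}] = 1 - p + pe^\lambda$, yields the master bound
$$\Pr(S \geq np + t) \;\leq\; e^{-\lambda(np+t)}\bigl(1 - p + pe^\lambda\bigr)^n \;\leq\; \exp\!\bigl(np(e^\lambda - 1) - \lambda(np + t)\bigr),$$
using $1 + x \leq e^x$ in the second step. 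Optimizing in $\lambda$ at $\lambda = \ln(1 + t/(np))$ rewrites this as $\Pr(S \geq np + t) \leq \exp(-np \cdot g(t/(np)))$, where $g(u) := (1+u)\ln(1+u) - u$.

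From here I would split into the two regimes of the lemma by lower-bounding $g$. For $0 \leq t \leq np$, so $u := t/(np) \in [0,1]$, the Taylor expansion $g(u) = u^2/2 - u^3/6 + u^4/12 - \cdots$ together with the elementary estimate $u^2/2 - u^3/6 \geq u^2/3$ valid on $[0,1]$ gives $g(u) \geq u^2/3$, so that $\Pr(S \geq np + t) \leq e^{-t^2/(3np)}$. For $t > np$, so $u > 1$, the required inequality $g(u) \geq u/3$ reduces to $h(u) := (1+u)\ln(1+u) \geq 4u/3$. Since $h(1) = 2\ln 2 > 4/3$, and the derivative $h'(u) - 4/3 = \ln(1+u) - 1/3$ is positive throughout $u \geq 1$, the gap $h(u) - 4u/3$ is positive at $u = 1$ and increasing on $[1, \infty)$, so the inequality persists there, yielding $\Pr(S \geq np + t) \leq e^{-t/3}$.

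The lower tail $\Pr(S \leq np - t)$ is handled by the symmetric argument: apply Markov to $e^{-\lambda S}$ with $\lambda > 0$ and repeat the computation, which produces the same exponential bounds. A union bound over the two tails then contributes the factor of $2$ in the statement. The only real difficulty is arithmetic bookkeeping so that the constant $3$ materializes in both exponents (rather than the slightly tighter Bernstein-type constants one could extract); no combinatorial content enters, so the whole argument is essentially a calculus exercise.
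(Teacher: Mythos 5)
The paper states this Chernoff bound as a standard tool and provides no proof of it, so there is no in-paper argument to compare against. Your proof is the standard Cram\'er--Chernoff derivation, and it is correct. Quick spot-checks: optimizing $\lambda = \ln(1+u)$ with $u = t/(np)$ indeed gives the exponent $-np\,g(u)$ with $g(u) = (1+u)\ln(1+u) - u$; the bound $g(u) \ge u^2/2 - u^3/6$ holds on $[0,1]$ because the Taylor series $\sum_{k\ge 2} (-1)^k u^k/(k(k-1))$ is alternating with terms decreasing in absolute value when $u \le 1$; the algebra $u^2/2 - u^3/6 - u^2/3 = u^2(1-u)/6 \ge 0$ is right; and for $u \ge 1$ your monotonicity argument for $(1+u)\ln(1+u) \ge 4u/3$ checks out since $2\ln 2 > 4/3$ and $\ln(1+u) > 1/3$ throughout. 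For the lower tail, the analogous optimization yields exponent $-np\,\tilde g(u)$ with $\tilde g(u) = (1-u)\ln(1-u) + u \ge u^2/2 \ge u^2/3$ on $[0,1]$, and the case $t > np$ is vacuous there since $S \ge 0$; this is actually slightly stronger than the upper tail, so the union bound gives the stated factor of $2$. Your phrase ``produces the same exponential bounds'' for the lower tail is a touch loose (it produces at-least-as-good bounds via a slightly different rate function), but the conclusion is sound.
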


\begin{lemma}[Lov\'asz Local Lemma]\label{LLL}\cite{EL, ASbook}
Let $\mathcal{E}_1, \dots, \mathcal{E}_n$ be $n$ events in any given probability space. Let $H$ be a simple graph with vertex set $[n]$ 
such that for each $i \in [n]$, the event $\mathcal{E}_i$ is mutually independent from the remaining events corresponding to non-neighbors of the vertex $i$, i.e., $\{ \mathcal{E}_j: j \neq i, \{i, j\} \notin E(H)\}$. Suppose there exist values $x_1, \dots, x_n \in (0,1)$ such that for each $i \in [n]$,
\[
\Pr(\mathcal{E}_i) \leq x_i \prod_{\{i,j\} \in E(H)} (1-x_j).
\]
Then the probability that none of the events $\mathcal{E}_i$ happens is positive, i.e., 
$
\Pr(\bigcap_{i=1}^n \bar{\mathcal{E}_i}) > 0.
$
\end{lemma}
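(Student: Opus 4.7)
The plan is to prove the Lovász Local Lemma by establishing, via induction on $|S|$, the stronger conditional statement
$$\Pr\!\left(\mathcal{E}_i \;\Big|\; \bigcap_{j \in S} \bar{\mathcal{E}_j}\right) \leq x_i \qquad \text{for every } i \in [n] \text{ and every } S \subseteq [n] \setminus \{i\}.$$
Once this claim is proved, the desired conclusion follows at once from the chain-rule expansion
$$\Pr\!\left(\bigcap_{i=1}^n \bar{\mathcal{E}_i}\right) \;=\; \prod_{i=1}^n \Pr\!\left(\bar{\mathcal{E}_i} \;\Big|\; \bigcap_{j<i} \bar{\mathcal{E}_j}\right) \;\geq\; \prod_{i=1}^n (1-x_i) \;>\; 0,$$
since each $x_i \in (0,1)$.

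For the induction, the base case $S = \emptyset$ is immediate from the hypothesis $\Pr(\mathcal{E}_i) \leq x_i \prod_{\{i,k\} \in E(H)} (1-x_k) \leq x_i$. For the inductive step with $|S| \geq 1$, I would split $S$ into $S_1 = S \cap N_H(i)$ and $S_2 = S \setminus S_1$, and then rewrite
$$\Pr\!\left(\mathcal{E}_i \;\Big|\; \bigcap_{j \in S} \bar{\mathcal{E}_j}\right) \;=\; \frac{\Pr\!\bigl(\mathcal{E}_i \cap \bigcap_{j\in S_1} \bar{\mathcal{E}_j} \;\big|\; \bigcap_{j\in S_2}\bar{\mathcal{E}_j}\bigr)}{\Pr\!\bigl(\bigcap_{j\in S_1} \bar{\mathcal{E}_j} \;\big|\; \bigcap_{j\in S_2}\bar{\mathcal{E}_j}\bigr)}.$$
The numerator is at most $\Pr(\mathcal{E}_i \mid \bigcap_{j\in S_2}\bar{\mathcal{E}_j}) = \Pr(\mathcal{E}_i)$ by the mutual-independence assumption applied to the non-neighbors in $S_2$, which in turn is at most $x_i \prod_{\{i,k\}\in E(H)}(1-x_k)$ by hypothesis. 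For the denominator, enumerating $S_1 = \{j_1,\ldots,j_r\}$, I expand the joint conditional probability as a telescoping product of one-at-a-time conditional probabilities $\Pr(\bar{\mathcal{E}}_{j_\ell} \mid \bar{\mathcal{E}}_{j_1},\ldots,\bar{\mathcal{E}}_{j_{\ell-1}}, \bigcap_{j\in S_2}\bar{\mathcal{E}_j})$; each conditioning set has size at most $|S|-1$, so the inductive hypothesis gives each factor $\geq 1 - x_{j_\ell}$. Multiplying yields a denominator bounded below by $\prod_{j\in S_1}(1-x_j)$. Since $S_1 \subseteq N_H(i)$, this sub-product cancels a portion of the numerator, giving
$$\Pr\!\left(\mathcal{E}_i \;\Big|\; \bigcap_{j\in S}\bar{\mathcal{E}_j}\right) \;\leq\; \frac{x_i \prod_{\{i,k\}\in E(H)}(1-x_k)}{\prod_{j\in S_1}(1-x_j)} \;\leq\; x_i,$$
completing the induction.

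The only delicate point is the independence step in bounding the numerator: one must invoke the hypothesis that $\mathcal{E}_i$ is \emph{mutually} independent of the family $\{\mathcal{E}_j : j \in S_2\}$, i.e.\ independent of every event in the $\sigma$-algebra they generate, so as to legitimately replace $\Pr(\mathcal{E}_i \mid \bigcap_{j\in S_2}\bar{\mathcal{E}_j})$ by $\Pr(\mathcal{E}_i)$. Everything else is bookkeeping: the split of $S$ into neighbors versus non-neighbors of $i$ in $H$, and matching the resulting product against the factor $\prod_{\{i,k\}\in E(H)}(1-x_k)$ supplied by the hypothesis. No concentration inequality or other probabilistic tool is required.
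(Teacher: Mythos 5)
The paper states the Lov\'asz Local Lemma (in its general, asymmetric form) purely as a cited black box, referring to Erd\H{o}s--Lov\'asz and Alon--Spencer, and supplies no proof of its own; so there is nothing in the paper to compare against. Your argument is the standard textbook proof of the general LLL and it is correct: the inductive claim $\Pr\bigl(\mathcal{E}_i \mid \bigcap_{j\in S}\bar{\mathcal{E}_j}\bigr) \le x_i$, the split of $S$ into the $H$-neighbours $S_1$ and non-neighbours $S_2$ of $i$, the use of mutual independence to replace $\Pr\bigl(\mathcal{E}_i \mid \bigcap_{j\in S_2}\bar{\mathcal{E}_j}\bigr)$ by $\Pr(\mathcal{E}_i)$, the telescoping lower bound $\prod_{j\in S_1}(1-x_j)$ on the denominator via the inductive hypothesis (each conditioning set in that expansion has size at most $|S|-1$, as you note), and the final chain-rule step. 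The only point you leave implicit, as most presentations do, is that every conditioning event $\bigcap_{j\in S}\bar{\mathcal{E}_j}$ encountered must have positive probability so that the conditional probabilities are well defined; this is easily absorbed into the same induction, since each factor $\Pr\bigl(\bar{\mathcal{E}}_{j_\ell} \mid \cdots\bigr) \ge 1 - x_{j_\ell} > 0$ forces the relevant intersections to have positive probability by induction on $|S|$. With that caveat spelled out, the proof is complete and matches the classical argument.
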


\subsection{Random vertex partition through local lemma} 
Some part of our construction builds on ideas from~\cite{Pasy}.
In order to bypass the $\log n$ barrier for $d$ and be able to analyze the algorithm for all $1 \leq d \leq n$, we however need to phrase our construction differently, using quantization and the Lov\'asz Local Lemma (Lemma \ref{LLL}). 

The idea is to partition $V(G)$ into a big set $B = \{v_1, \dots, v_{|B|}\}$ and a small set $S$, where $|S| = (n/d)\cdot o(d)$. At the end, we will assure that 
$\fv(v_{i+1}) = \fv(v_i) + 1$ in $B$, and that vertices in $S$ have larger weights than those in $B$. 
Our argument 
divides into three steps.
Step 1 includes a random construction  
positioning weights in $B$ close to expected values, which are relatively sparsely distributed.
In Step 2 we modify 
the weights of edges across $B$ and $S$ to make vertices in $B$ have the desired weights. This is also 
the main purpose of singling out the set $S$. One benefit of $S$ being small compared to $B$ is that if we assign 
heavy weights between $S$ and $B$, then weights of vertices in $S$ are expected to increase more significantly than those in $B$. Step 3 is to modify weights in $S$ in order to make them all pairwise distinct. 


Fix parameters $\epsilon,\gamma$ such that $\epsilon \in (0,1/4)$ and $0< 2\gamma < \epsilon$.
Let $G$ be an $n$-vertex $d$-regular graph.
Set $\sstar =13\ceiling{d^{1/2+\epsilon}/13}$, note that $\sstar\in [d^{1/2+\epsilon},d^{1/2+\epsilon}+13)$ and $13|\sstar$. Unless specified, we always assume $d$ is sufficiently large in terms of $\gamma$. 

We first 
describe the main random ingredient of the construction. 
Let $X_v$ for $v \in V(G)$ be i.i.d.~ uniform random variables, $X_v\sim U[0,1]$. 
We use the values of $X_v$'s to separate the vertices into $d$ bins $B_i$ where 
$B_i = \{v\in V(G): (i-1)/d \leq X_v < i/d\}$ for $1 \leq i < d$ and $B_d = \{v\in V(G): 1-1/d \leq X_v \leq 1\}$; note that in expectation, each $B_i$ includes $n/d$ vertices.
Let the big set, consisting of most of the bins be defined as  $B=\bigcup_{1\leq i\leq d-\sstar}B_i$.
The remaining bins form a small set $S$, which we partition into regular $13$ subsets  
$S_i = \{ \bigcup B_j: d-(14-i){\sstar/13}  < j \leq d-(13-i){\sstar/13}\}$ for  $1 \leq i \leq 13$, 
hence $S=V(G)\setminus B = \bigcup_{1\leq i\leq 13}S_i$. 

Finally, we label some edges as ``corrected" to satisfy a subtle technical issue
(and guarantee later that the average weight of edges weighted $\integer{n/d}+1$ and $\integer{n/d}+2$ is exactly  $(n/d)+1$).
More precisely, we randomly label an edge with both end vertices in $B$ {\it ``corrected"}  independently with probability $\max(\fraction{n/d}, 1-\fraction{n/d})$, which is at least $1/2$.


%

\begin{lemma} \label{lem:main}
With positive probability, the following statements hold simultaneously if $d$ is large enough.
\begin{enumerate}
\item \label{item:vSi}($C_{vS_i}$) For each $v \in V(G)$ and $1 \leq i \leq 13$, $\deg_{S_i}(v) \in [\sstar/13 - d^{1/2+\gamma}, \sstar/13 + d^{1/2+\gamma}]$. 
\item\label{item:vS} ($C_{vS}$)  For each $v \in V(G)$, $\deg_S(v) \in [\sstar - 13d^{1/2+\gamma}, \sstar + 13d^{1/2+\gamma}]$, or equivalently, $\deg_B(v) \in [d-\sstar - 13d^{1/2+\gamma}, d-\sstar + 13d^{1/2+\gamma}]$.
\item\label{item:vB}($C_{vB}$) For each $v\in V(G)$, if $v \in B_i\cap B$ for some $i$, then the number of edges between $v$ and $\{\bigcup B_j, {d-s^*-i+1< j \leq d-s^*}\}$ is in the interval 
$[(i-1) - d^{1/2+\gamma}, (i-1) + d^{1/2+\gamma}]$.
\item \label{item:vBcorrect} ($C_{vB}'$) For each $v\in V(G)$, if $v \in B_i\cap B$ for some $i$, then the number edges between $v$ and  $\{\bigcup B_j, {d-s^*-i+1< j \leq d-s^*}\}$ that are labeled ``corrected" is in the interval $[(i-1)\alpha- \alpha d^{1/2+\gamma},  (i-1)\alpha + \alpha d^{1/2+\gamma} ]$ where $\alpha = \max(\fraction{n/d}, 1-\fraction{n/d})$. 
\item \label{item:Ci} ($C_{i}$) For each $1 \leq i \leq d$, 
$|\bigcup_{j \leq i} B_i| \in [ in/d - nd^{\gamma}/\sqrt{d},  in/d + nd^{\gamma}/\sqrt{d}]$.
\item\label{item:CSi} $(C_{S_i})$ For each $1 \leq i \leq 13$, $|S_i| \in [s^* n/(13d)- 2nd^{\gamma}/\sqrt{d}, s^* n/(13d) + 2nd^{\gamma}/\sqrt{d}]$.
\end{enumerate}
\end{lemma}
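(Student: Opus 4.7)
The plan is to apply the Lovász Local Lemma (Lemma~\ref{LLL}). The underlying independent randomness is $\{X_v\}_{v\in V(G)}$ together with an independent Bernoulli($\alpha$) ``corrected''-bit for every edge of $G$ (only those with both endpoints in $B$ are ultimately used). For each $v\in V(G)$ I introduce \emph{local} bad events: thirteen copies (one per $i\in\{1,\dots,13\}$) for the failure of item~\ref{item:vSi}, and single events for the failures of item~\ref{item:vB} and item~\ref{item:vBcorrect}. For each $i\in\{1,\dots,d\}$ (resp.\ $i\in\{1,\dots,13\}$) I introduce a \emph{global} bad event for the failure of item~\ref{item:Ci} (resp.~\ref{item:CSi}). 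Item~\ref{item:vS} needs no separate event since $\deg_S(v)=\sum_{i=1}^{13}\deg_{S_i}(v)$ and a triangle inequality propagates item~\ref{item:vSi}. A local event at $v$ depends only on $\{X_u:u\in N[v]\}$ and on the corrected bits of edges incident to $v$, so its local neighbors in the dependency graph are local events at vertices of graph-distance at most $2$ from $v$, a set of at most $16 d^2$ events; global events are adjacent to everything.

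Lemma~\ref{bound:chernoff} delivers stretched-exponential failure probabilities in every case. The quantity $\deg_{S_i}(v)$ is a sum of $d$ independent Bernoullis of parameter $s^*/(13d)$ and mean $\Theta(d^{1/2+\epsilon})$, and since $2\gamma<\epsilon$ the target deviation $d^{1/2+\gamma}$ falls well below the mean, yielding a failure probability of order $\exp(-\Omega(d^{1/2+2\gamma-\epsilon}))$; the event $C_{vB}$ is analogous with mean $i-1\leq d$. The subtle case is item~\ref{item:vBcorrect}, whose event is governed by a random edge set: conditional on $\{X_u\}$, the corrected count is a Binomial$(m,\alpha)$ variable with $m=(i-1)\pm d^{1/2+\gamma}$ on the good event of item~\ref{item:vB}, and a second Chernoff bound on the corrected bits --- with the tolerance split between the two sources of fluctuation (or, equivalently, loosened to $2\alpha d^{1/2+\gamma}$, a change inconsequential for the remainder of the proof) --- yields the same bound. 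For the global events, $|\bigcup_{j\leq i}B_j|$ is a sum of $n$ Bernoullis of parameter $i/d$ with deviation target $nd^{\gamma-1/2}$; Chernoff gives $\exp(-\Omega(nd^{2\gamma-1}))$ when $t\leq np$ and $\exp(-\Omega(nd^{\gamma-1/2}))$ otherwise. The treatment of item~\ref{item:CSi} is identical.

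I will set LLL weights $x=d^{-3}$ for each local event and $x=e^{-d^{\gamma/2}}$ for each of the $d+13$ global events. For a local event the LLL right-hand side is at least
\[
d^{-3}\bigl(1-d^{-3}\bigr)^{16d^2}\bigl(1-e^{-d^{\gamma/2}}\bigr)^{d+13}\;\geq\;\tfrac14\,d^{-3},
\]
which dominates the Chernoff bound for $d$ large enough. For a global event the analogous right-hand side is at least $\tfrac14 e^{-d^{\gamma/2}}$, which also dominates the Chernoff bound since $n\geq d$ and $\gamma/2<1/2$. All hypotheses of Lemma~\ref{LLL} are then met and $\Pr(\bigcap\bar A)>0$ gives the desired simultaneous realization. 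The main technical hurdle --- beyond careful bookkeeping --- is the two-step conditional-Chernoff argument for item~\ref{item:vBcorrect}, which requires separating the combinatorial fluctuation (captured by item~\ref{item:vB}) from the binomial fluctuation of the corrected labels; the remaining verifications are routine tail estimates and constitute a textbook application of the asymmetric local lemma.
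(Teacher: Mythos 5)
Your overall strategy mirrors the paper's: partition the bad events into local ones (per vertex) and global ones (per bin), bound each by Chernoff, observe that local events at vertices of graph-distance at least $3$ are mutually independent (giving the $16d^2$ bound on local neighbors), and close with the asymmetric Lov\'asz Local Lemma. Two points, however, deserve attention.

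First, and this is a genuine gap: your verification of the LLL condition for the global events is incorrect as stated. You assign weight $x=d^{-3}$ to each of the roughly $15n$ local events, so the LLL right-hand side for a global event contains the factor $(1-d^{-3})^{15n}$. You claim this right-hand side is ``at least $\tfrac14 e^{-d^{\gamma/2}}$,'' but when $n\gg d^3$ the factor $(1-d^{-3})^{15n}\leq e^{-15n/d^3}$ can be far smaller than any constant, so the claimed lower bound fails. The correct verification must compare the $n$-linear decay of the Chernoff bound, $\exp(-nd^{2\gamma-1}/3)$, against the $n$-linear decay $\exp(-\Theta(n/d^3))$ introduced by the local weights, and observe that $d^{2\gamma-1}\gg d^{-3}$ for large $d$; this is exactly the inequality the paper checks explicitly (with its choice $x=d^{-2}/1600$, producing the term $15n/(160d^2)$ on the right). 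Your weights do in fact satisfy the LLL hypothesis, but your justification skips the step where the argument could break.

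Second, your two-step conditional treatment of item~\ref{item:vBcorrect} --- condition on $C_{vB}$, then apply Chernoff to a Binomial$(m,\alpha)$ --- is more work than necessary and, as you yourself note, only yields deviation $2\alpha d^{1/2+\gamma}$ rather than the stated $\alpha d^{1/2+\gamma}$. The paper avoids this entirely by viewing, for each of the $d$ neighbors $u$ of $v$, the single event ``$u$ lands in the designated range \emph{and} $uv$ is labeled corrected'' as one Bernoulli variable of parameter $\alpha(i-1)/d$, independent across $u$; one direct Chernoff application then gives exactly the interval claimed. You are right that a factor $2$ here would be absorbed downstream, but as a proof of the lemma as worded it is a deviation, and the joint-Bernoulli view both tightens and simplifies the argument.
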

\begin{proof}
Let $\cE_{vS_i}$
be the bad event that $C_{vS_i}$ 
does not hold for given $v \in V(G)$, $1 \leq i \leq 13$. We analogously denote by 
$\cE_{vS}$, $\cE_{vB}, \cE_{vB}', 
\cE_i, 
\cE_{S_i}$ the remaining bad events. 
We first bound 
the probability of each of these, and then use Lov\'asz Local Lemma to show that  with positive probability none of these bad events happen. 

Fix $v\in V(G)$ and let  us consider 
$\cE_{vS_i}$ for any given $1 \leq i \leq 13$. 
Since each of $d$ neighbors 
of $v$ is independently included in 
$S_i$ 
with probability exactly $\sstar/(13d)$,
where $d^{1/2+\gamma}<\sstar/13 
<d$ for $d$ large enough, by the Chernoff Bound, 
\[ \Pr(\cE_{vS_i}) 
< 2e^{-d^{1+2\gamma}/(3d)} 
< 2e^{-d^{2\gamma}/6}. 
\]
As the events $C_{vS_i}$ imply $C_{vS}$,
we proceed to compute  the conditional probabilities $\Pr(\cE_{vB}|v \in B_i)$ and $\Pr(\cE_{vB}'|v \in B_i)$. These are trivially $0$ for $i = 1$. Thus we next assume $2\leq i \leq d-\sstar$. 
As each of $d$ neighbors $u$ of $v$ has probability $(i-1)/d$ 
to be in $\bigcup_{d-s^*-i+1< j \leq d-s^*}B_j$ and probability $\alpha(i-1)/d$ 
to be in $\bigcup_{d-s^*-i+1< j \leq d-s^*}B_j$ and simultaneously form a corrected edge $uv$, by the two Chernoff Bounds, since $\alpha\geq 1/2$,
\begin{align*}
\Pr(\cE_{vB} | v\in B_i) \leq & 2\exp\left({-\frac{d^{1+2\gamma}}{3
\max\left(
i-1,d^{1/2+\gamma}
\right) } }\right)
< 2e^{-d^{1+2\gamma}/(3d)} <2e^{-d^{2\gamma}/6}, \\
\Pr(\cE_{vB}' | v\in B_i) \leq & 2\exp\left({-\frac{\alpha^2d^{1+2\gamma}}{3
\max\left(
\alpha(i-1),\alpha d^{1/2+\gamma}
\right) } }\right)
< 2e^{-\alpha d^{1+2\gamma}/(3d)} \leq 2e^{-d^{2\gamma}/6}.
\end{align*}
Since by definition  $\Pr(\cE_{vB}|v \in S) = 0$ and $\Pr(\cE_{vB}'|v \in S) = 0$, thus by the law of total probability,
 $\Pr(\cE_{vB}) < 2e^{-d^{2\gamma}/6}$ and  $\Pr(\cE_{vB}') < 2e^{-d^{2\gamma}/6}$. 




To finally estimate $\Pr(\cE_i)$, 
we note that for any $i$ each of $n$ vertices 
is independently included in $\bigcup_{j \leq i}B_i$ 
with probability $i/d$. 
Thus by the Chernoff Bound,
\begin{align*}
\Pr(\cE_{i}) \leq & 2\exp\left({-\frac{n^2d^{2\gamma}/d}{3
\max\left(
in/d, nd^{\gamma}/\sqrt{d}
\right)} }\right)
\leq 2e^{-\frac{n^2d^{2\gamma-1}}{3n}} =  2e^{-n/(3d^{1-2\gamma})}.
\end{align*}

Since conditions (\ref{item:vSi}) and (\ref{item:Ci}) of the lemma imply conditions (\ref{item:vS}) and (\ref{item:CSi}), respectively, we just need to show that with positive probability none of  $\cE_{vS_i}, \cE_{vB}, \cE_{vB}', \cE_i$ 
holds.  
We will apply the  Lov\'asz Local Lemma (Lemma \ref{LLL}).
There are $13n$ events of type $\cE_{vS_i}$
(for each $v\in V(G)$ and $1 \leq i \leq 13$), $n$ events of type $\cE_{vB}$, 
$n$ events of type $\cE_{vB}'$ and $d$ events of type $\cE_i$.
%
Note 
 that for any given $v$ and $i$,
each of the events $\cE_{vS_i}, \cE_{vB},\cE_{vB}'$ is mutually independent of all other events $\cE_{uS_j},\cE_{uB},\cE_{uB}'$ with $u$ at distance at least $3$ from $v$ in $G$, i.e. all but most $13(d^2+1)+(d^2+1)+(d^2+1) < 16 d^2$ such events.
%
%
%
We assign value $x = d^{-2}/1600$ to each $\cE_{vS_i}, \cE_{vB},\cE_{vB}'$, and assign value $y = d^{-1}/100$ to all 
$\cE_i$. Therefore, in order to apply Lemma \ref{LLL} we just need to check that
 \[ \begin{cases}  2e^{-d^{2\gamma}/6}\leq   x (1-x)^{16d^2}(1-y)^{d}
 \\ 
  2e^{-n/(3d^{1-2\gamma})} \leq y (1-y)^d (1-x)^{15n}
  \end{cases}.\]
  Note that $1-a \geq e^{-10a}$ for $0 \leq a \leq 0.5$. Thus it is sufficient to show that: 
  \[ \begin{cases}  2e^{-d^{2\gamma}/6}\leq   e^{\ln(d^{-2}/1600)} e^{-160d^2 \cdot (d^{-2}/1600)} e^{-10 d \cdot d^{-1}/100}
 \\ 
  2e^{-n/(3d^{1-2\gamma})} \leq e^{\ln(d^{-1}/100)}    e^{-10\cdot d\cdot d^{-1}/100} e^{-150n \cdot d^{-2}/1600}, 
  \end{cases}\] 
which is equivalent to:
    \[ \begin{cases}  d^{2\gamma}/6\geq \ln 2  + \ln(1600d^{2}) + 1/10 + 1/10
 \\ 
  n/(3d^{1-2\gamma}) \geq 
 \ln 2+ \ln(100d)    + 1/10+ 15n/(160d^2) 
  \end{cases}.\]
These two inequalities above hold when $d$ is sufficiently large in terms of $\gamma$. The thesis thus follows by Lemma \ref{LLL}. 
\end{proof}

\subsection{Assigning weights}
Suppose all statements in Lemma \ref{lem:main} hold.
We will assign and modify edge weights in $G$ in three steps.
Whenever needed we assume $d$ is large enough in terms of $\gamma$.
\paragraph{Step 1.} The purpose of 
this step is to construct an initial weighting function $f_1: E(G) \to \mathbb{N}$ so that 
all $v \in B_i$ have weights 
very close to $(n/d)i$
 for each $i$. For this aim 
 for every edge $\{u,v\}$ with
$v \in B_i \cap B$ and $u \in B_j \cap B$ we define $f_1(\{u,v\})$ as follows.
If $\fraction{n/d} \geq 1/2$ and 
$d-\sstar -i+1 < j \leq d-\sstar$, let $f_1(\{u,v\}) = \integer{n/d}+1$ if $\{u,v\}$ is not a corrected edge, and let $f_1(\{u,v\}) = \integer{n/d}+2$ if it is a corrected edge. 
If $\fraction{n/d} < 1/2$ and $d-\sstar -i+1 < j \leq d-\sstar$, let $f_1(\{u,v\}) = \integer{n/d}+2$ if $\{u,v\}$ is not a corrected edge, and let $f_1(\{u,v\}) = \integer{n/d}+1$ if it is a corrected edge. 
We next define $\omega =\max( \ceiling{n/d^{1+\epsilon - 2\gamma}},2)$ and set $f_1(e)=i\omega + \ceiling{n/d}$ for every edge $e$ across $B$ and $S_i$ for $1\leq i\leq 13$.
We finally set $f_1(e')=1$ for all the remaining edges $e'\in E(G)$.

%
Consider any $v \in B_i \cap B$. We assume  $\fraction{n/d} \geq 1/2$, as the analysis and result in the opposite case is essentially   the same.
By the definition of $f_1$ and Lemma \ref{lem:main}, 
since $1< n/d$ and $\omega\leq \ceiling{n/d}< 2n/d$,
\begin{align*}
\fvv{1}(v) = &  \sum_{u:\{u,v\}\in E(G),u\in S} f_1(\{u,v\}) + \sum_{u:\{u,v\}\in E(G),u\in B} f_1(\{u,v\}) \geq  \left(\sum_{j=1}^{13}  (j\omega + \ceiling{n/d}) (\sstar/13 - d^{1/2+\gamma})\right)\\
  & + \left(\integer{n/d}((i-1)-d^{1/2+\gamma}) + \fraction{n/d}((i-1)-d^{1/2+\gamma}) + \left(d-\sstar-13d^{1/2+\gamma}\right)\right)
\\
> & \left(\left( 7\omega + \ceiling{n/d}  \right)\sstar - d^{1/2+\gamma} (91\omega + 13 \ceiling{n/d})\right) 
+ \left((n/d)(i-1)+d-\sstar-16n/d^{1/2-\gamma} \right)
\\
\geq & \left((n/d)(i-1)+d+  \left( 7\omega + \ceiling{n/d}-1  \right)\sstar\right) -  224n/d^{1/2-\gamma}.
\end{align*}
%
By almost the same reasoning we may obtain an analogous upper bound for $\fvv{1}(v)$, implying that
\begin{align}
 \left|\fvv{1}(v) -\left((n/d)(i-1)+d+  \left( 7\omega + \ceiling{n/d}-1  \right)\sstar\right)\right| \leq 
  224n/d^{1/2-\gamma}. \label{eq:fvcenter} 
\end{align}
Moreover, the following claim holds.
\begin{claim}\label{claim:f1}
For any edge $e \in E(B)$, $f_1(e) \in [1, \integer{n/d}+2]$. For any edge $e$ across $B$ and $S$, $f_1(e)\in [\ceiling{n/d}, \ceiling{n/d}+  13\omega ]$. For any edge $e \in E(S)$, $f_1(e)=1$.  
\end{claim}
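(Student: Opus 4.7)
The plan is to prove this claim by direct inspection of the piecewise definition of $f_1$ from Step 1. Since $f_1$ is built from a handful of explicit rules plus a catch-all ``remaining edges'' clause assigning weight $1$, verifying the three asserted ranges amounts to tracing, for each edge class, which rule applies and reading off the possible outputs.

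For $e = \{u,v\} \in E(B)$, the explicit rule yields a weight in $\{\integer{n/d}+1, \integer{n/d}+2\}$ (the two options being swapped depending on whether $\{u,v\}$ is ``corrected'' and on whether $\fraction{n/d} \geq 1/2$), whenever one endpoint lies in $B_i$ and the other in $B_j$ with $d-\sstar-i+1 < j \leq d-\sstar$. Any remaining edge of $E(B)$ falls under the catch-all, receiving weight $1$. In either case $f_1(e) \in [1, \integer{n/d}+2]$. For an edge $e$ across $B$ and $S$, the construction places $e$ across $B$ and a unique $S_i$ with $1 \leq i \leq 13$, assigning $f_1(e) = i\omega + \ceiling{n/d}$. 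Since $1 \leq i \leq 13$ and $\omega \geq 2 > 0$, this lies in $[\ceiling{n/d}+\omega, \ceiling{n/d}+13\omega] \subseteq [\ceiling{n/d}, \ceiling{n/d}+13\omega]$. For $e \in E(S)$, none of the explicit rules applies (each requires at least one endpoint in $B$), so the catch-all assigns $f_1(e) = 1$.

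This claim has essentially no obstacle; it is a bookkeeping corollary of the construction in Step 1, whose role is to package the output range of $f_1$ on each edge class for use in Steps 2 and 3. The only point that warrants a brief check is that the disjoint union of the explicit cases together with the catch-all indeed covers every edge of $G$, which is immediate from the definition.
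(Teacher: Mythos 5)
Your proposal is correct and takes essentially the same route as the paper, which states Claim~\ref{claim:f1} without a separate proof precisely because it is a direct read-off from the case analysis defining $f_1$ in Step~1. Your careful tracing of each edge class against the piecewise rules (including noting that the catch-all covers whatever the explicit clauses miss, and that $\omega\geq 2$ keeps the cross-edge weights at or above $\ceiling{n/d}$) is exactly the bookkeeping the paper leaves implicit.
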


\paragraph{Step 2.} Consider a linear ordering $v_1,v_2,\ldots$ of the vertices in $B$ such that
$X_{v_j} \geq X_{v_i}$ if $j \geq i$ (where $X_v$'s refer to values of the random variables used within the proof of Lemma~\ref{lem:main} for which all conditions of the lemma hold; we may assume these are all distinct, as this is true with probability $1$). 
To adjust edge and vertex weights we will provide $f_2: E(G)\to \mathbb{N}$ supported on edges between $B$ and $S$ (i.e. equal to $0$ for the remaining edges) so that as a result, for $f_{12}:=f_1+f_2$, the following conditions hold:
(1) each $v_k \in B$ has weight 
$k+d+  \left( 7\omega + \ceiling{n/d}-1  \right)\sstar +  \ceiling{250n/d^{1/2-\gamma}}$;
(2)  for any $u \in S, v\in B$, $\fvv{12}(u)-\fvv{12}(v) >0$; 
and finally: (3) for $u \in S_{i+1}, v\in S_i$ with $1 \leq i \leq 12$, $\fvv{12}(u)-\fvv{12}(v)$ is large enough to provide a buffer for weight adjustments in Step 3.
%

Suppose $v_k \in B_i \cap B$. Then $ |\bigcup_{j\leq i-1} B_j| \leq k \leq |\bigcup_{j\leq i} B_j|$,
and thus, by Lemma \ref{lem:main} (\ref{item:Ci}), $(i-1)n/d - n/d^{1/2-\gamma} \leq k \leq in/d +   n/d^{1/2-\gamma}$. Therefore,
\begin{align}
\left| k -(i-1)n/d\right| \leq 2 n/d^{1/2-\gamma}.  \label{eq:k}
\end{align}
By (\ref{eq:fvcenter}), (\ref{eq:k}) and the triangle inequality,
\begin{align}
|\fvv{1}(v_k) -\left(k+d+  \left( 7\omega + \ceiling{n/d}-1  \right)\sstar\right)| \leq 
& 226n/d^{1/2-\gamma}. 
\label{eq:fvsurplusmain}
\end{align}


\begin{claim}\label{claim:vidistinct}
There exists 
 $f_2:E(G)\to \mathbb{N}$ supported on edges across $B$ and $S$ such that 
$\| f_2 \|_\infty \leq \ceiling{10^3n/d^{1+\epsilon-\gamma}}$ and 
for each $v_k \in B$, $\fvv{12}(v_k) =
k+d+  \left( 7\omega + \ceiling{n/d}-1  \right)\sstar +  \ceiling{250n/d^{1/2-\gamma}}$, provided 
$d$ is sufficiently large in terms of $\epsilon, \gamma$.
\end{claim}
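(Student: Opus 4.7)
The plan is to define $f_2$ locally at each vertex $v_k\in B$ by distributing the required ``correction'' $\Delta_k$ among the $\deg_S(v_k)$ edges that run from $v_k$ into $S$. Since every edge across $B$ and $S$ has a \emph{unique} endpoint in $B$, the weights $f_2(\{v_k,u\})$ for $u\in S\cap N(v_k)$ can be chosen independently for different $v_k$ without any conflict---this decoupling is what makes the construction straightforward. On edges inside $B$ or inside $S$ I would simply set $f_2(e)=0$, as the claim requires.

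Concretely, for each $v_k\in B$ let
\[
\Delta_k \;:=\; k+d+\bigl(7\omega+\lceil n/d\rceil-1\bigr)\sstar+\lceil 250n/d^{1/2-\gamma}\rceil-\fvv{1}(v_k),
\]
which is an integer because $\fvv{1}(v_k)\in\mathbb{N}$. By (\ref{eq:fvsurplusmain}) we have $|\Delta_k-\lceil 250n/d^{1/2-\gamma}\rceil|\le 226n/d^{1/2-\gamma}$, so for $d$ large in terms of $\gamma$ the value $\Delta_k$ lies in $(0,\,477n/d^{1/2-\gamma})$. Setting $s_k:=\deg_S(v_k)$, Lemma~\ref{lem:main}(\ref{item:vS}) together with $\sstar\ge d^{1/2+\epsilon}$ and $\epsilon>2\gamma$ gives $s_k\ge \sstar-13d^{1/2+\gamma}\ge \sstar/2\ge d^{1/2+\epsilon}/2$ for $d$ large.

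Now write $\Delta_k = q_k s_k + r_k$ with $q_k=\lfloor\Delta_k/s_k\rfloor$ and $0\le r_k<s_k$. I would assign $f_2(\{v_k,u\})=q_k+1$ on some arbitrary choice of $r_k$ of the edges from $v_k$ to $S$, and $f_2(\{v_k,u\})=q_k$ on the remaining $s_k-r_k$ such edges. Then
\[
\sum_{u\in S\cap N(v_k)}f_2(\{v_k,u\})\;=\;r_k(q_k+1)+(s_k-r_k)q_k\;=\;\Delta_k,
\]
so $\fvv{12}(v_k)=\fvv{1}(v_k)+\Delta_k$ hits the prescribed target exactly. The $\|f_2\|_\infty$ bound is then immediate: every assigned value is at most $q_k+1\le \Delta_k/s_k+1\le (477n/d^{1/2-\gamma})/(d^{1/2+\epsilon}/2)+1\le \lceil 10^3 n/d^{1+\epsilon-\gamma}\rceil$ once $d$ is large in terms of $\epsilon,\gamma$.

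The only subtle points---and the places where one must be careful rather than just compute---are: (i) ensuring $\Delta_k>0$, which needs the constant $250$ in the target offset to dominate the $226$ from (\ref{eq:fvsurplusmain}); and (ii) ensuring $s_k$ is large enough that the per-edge share $\Delta_k/s_k$ stays within the allowed $\|f_2\|_\infty$ budget, which uses that $\sstar$ grows faster than the error terms (i.e.\ the hypothesis $\epsilon>2\gamma$). No use of the local lemma or further randomness is needed; the claim is essentially a counting argument built on top of Lemma~\ref{lem:main}.
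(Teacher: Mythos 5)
Your proposal is correct and is essentially the paper's argument made explicit: both distribute the (positive, $O(n/d^{1/2-\gamma})$-sized) deficit $\Delta_k$ evenly over the $\deg_S(v_k)>\sstar/2$ edges from $v_k$ to $S$, using that these edges have distinct endpoints in $B$ so the per-vertex adjustments decouple. One small slip in the closing chain: the real-valued inequality $954n/d^{1+\epsilon-\gamma}+1\le\ceiling{10^3n/d^{1+\epsilon-\gamma}}$ is false when $n/d^{1+\epsilon-\gamma}$ is small (e.g.\ near $1$, where the right side equals $1$), so ``once $d$ is large'' does not rescue it; you should instead use that $q_k$ is an integer, whence every assigned value is at most $\ceiling{\Delta_k/s_k}\le\ceiling{954n/d^{1+\epsilon-\gamma}}\le\ceiling{10^3n/d^{1+\epsilon-\gamma}}$ by monotonicity of the ceiling, which is exactly how the paper phrases the bound.
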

\begin{proof}
Note that by~(\ref{eq:fvsurplusmain}), the weight 
of every $v_k \in B$ is smaller than the target value 
$k+d+  \left( 7\omega + \ceiling{n/d}-1  \right)\sstar +  \ceiling{250n/d^{1/2-\gamma}}$, 
while we need to add no more than $500n/d^{1/2-\gamma}$ to achieve it.
This discrepancy can be leveled up by adding appropriate quantities to weights of edges between $v_k$ and $S$, thereby defining $f_2$.
As by Lemma \ref{lem:main} (\ref{item:vS}), $d_S(v_k)\geq \sstar - 13d^{1/2+\gamma} > \sstar/2$, it is sufficient to add to every edge weight between $v_k$ and $S$ at most
$\ceiling{(500n/d^{1/2-\gamma})/(\sstar/2)} \leq \ceiling{10^3n/d^{1+\epsilon-\gamma}}$.
%
%
%
\end{proof}

\begin{claim}\label{claim:buffer}
 For every $u \in S$ and $v\in B$, $\fvv{12}(u) > \fvv{12}(v)$. 
For each $2\leq i \leq 13$ and every $u \in S_{i}$, $u' \in S_{i-1}$, we have $\fvv{12}(u) - \fvv{12}(u') \geq  0.4\omega d$.  
\end{claim}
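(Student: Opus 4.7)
The plan is to derive explicit lower bounds for $\fvv{12}(u)$ when $u\in S_i$ and compare them to the closed form $\fvv{12}(v_k)=k+d+(7\omega+\ceiling{n/d}-1)\sstar+\ceiling{250n/d^{1/2-\gamma}}$ supplied by Claim~\ref{claim:vidistinct} for vertices in $B$. Since $f_1$ assigns weight $i\omega+\ceiling{n/d}$ to each edge between $u\in S_i$ and $B$, weight $1$ to each edge inside $S$, and $f_2$ is non-negative, supported on $B$--$S$ edges, and edgewise bounded by $\ceiling{10^3 n/d^{1+\epsilon-\gamma}}$, I would start from
\[
\fvv{12}(u)=(i\omega+\ceiling{n/d})\deg_B(u)+\deg_S(u)+\fvv{2}(u),
\]
and apply Lemma~\ref{lem:main}(\ref{item:vS}) to force $\deg_B(u)$ and $\deg_S(u)$ within additive error $13d^{1/2+\gamma}$ of $d-\sstar$ and $\sstar$ respectively.

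For statement (2), subtracting the above expressions for $u\in S_i$ and $u'\in S_{i-1}$ and using $\deg_S=d-\deg_B$ gives
\[
\fvv{12}(u)-\fvv{12}(u')=\omega\deg_B(u)+\bigl((i-1)\omega+\ceiling{n/d}-1\bigr)\bigl(\deg_B(u)-\deg_B(u')\bigr)+\fvv{2}(u)-\fvv{2}(u').
\]
The dominant positive contribution is $\omega\deg_B(u)\geq\omega(d-\sstar-13d^{1/2+\gamma})=(1-o(1))\omega d$, because $\sstar<d^{1/2+\epsilon}+13=o(d)$. The middle term has absolute value at most $(12\omega+\ceiling{n/d})\cdot 26d^{1/2+\gamma}$, and $|\fvv{2}(u)-\fvv{2}(u')|\leq d\ceiling{10^3 n/d^{1+\epsilon-\gamma}}$; using $\omega d\geq n/d^{\epsilon-2\gamma}$, both of these compare to $\omega d$ as $d^{\epsilon-\gamma-1/2}$ and $d^{-\gamma}$ respectively, each tending to $0$. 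Hence for $d$ sufficiently large the difference is at least $0.4\omega d$.

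For statement (1), the hardest case is $u\in S_1$ (since $\fvv{12}(u)$ is increasing in $i$) against $v=v_k$ with $k\leq |B|\leq n$. Plugging in and bounding $\ceiling{n/d}\cdot d\geq n$, the $n$ coming from $u$ and the $k\leq n$ coming from $v$ cancel to leading order, leaving a lower bound of the form
\[
\fvv{12}(u)-\fvv{12}(v_k)\;\geq\;\omega d-d-8\omega\sstar-2n\sstar/d-O\!\bigl((\omega+\ceiling{n/d})d^{1/2+\gamma}+n/d^{1/2-\gamma}\bigr).
\]
Here $\omega d\geq 2d$ beats the $d$ term, $8\omega\sstar=o(\omega d)$ since $\sstar=o(d)$, and the only delicate inequality is $2n\sstar/d\leq 2n/d^{1/2-\epsilon}\ll\omega d$: after dividing by $\omega d\geq n/d^{\epsilon-2\gamma}$ it reduces to $d^{2\epsilon-2\gamma-1/2}\to 0$, which uses exactly the hypotheses $\epsilon<1/4$ and $2\gamma<\epsilon$.

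The main obstacle is this last inequality in the regime where $\omega$ is as small as it can be, i.e.\ $\omega=\ceiling{n/d^{1+\epsilon-2\gamma}}$: there the main term $\omega d$ is only $n/d^{\epsilon-2\gamma}$, which is comparable in order to the $f_1$-error $\ceiling{n/d}\cdot d^{1/2+\gamma}$, so the full strength of $\epsilon<1/4$ together with a sufficiently small choice of $\gamma$ is needed. Once this is verified, statement (2) comes essentially for free because the constant $\omega$-gap between consecutive $S_i$-coefficients produces an order-$\omega d$ buffer that dwarfs every error term.
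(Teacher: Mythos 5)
Your algebraic decomposition
\[
\fvv{12}(u)-\fvv{12}(u')=\bigl((i-1)\omega+\ceiling{n/d}-1\bigr)\bigl(\deg_B(u)-\deg_B(u')\bigr)+\omega\deg_B(u)+\fvv{2}(u)-\fvv{2}(u')
\]
is correct and isolates the dominant term $\omega\deg_B(u)$ more cleanly than the paper does; the paper instead computes separate one--sided bounds ($\fvv{12}(u)\geq(i\omega+\ceiling{n/d})(d-2\sstar)$ and $\fvv{12}(u')\leq((i-1)\omega+\ceiling{n/d}+10^3n/d^{1+\epsilon-\gamma}+1)d$) and subtracts. Both routes rest on the same inputs: the degree bounds from Lemma~\ref{lem:main}(\ref{item:vS}) and the $\|f_2\|_\infty$ bound from Claim~\ref{claim:vidistinct}. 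For part (1) you also match the paper in spirit, though you compare against the target value at $v_k$ directly rather than against $\fvv{12}(v_{|B|})$; both rely on $\ceiling{n/d}\cdot d\geq n$ to cancel the order-$n$ terms.

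One inaccuracy to fix: you claim $d\ceiling{10^3n/d^{1+\epsilon-\gamma}}$ compares to $\omega d$ as $d^{-\gamma}\to 0$. This fails when $\omega=2$, i.e.\ when $n\leq 2d^{1+\epsilon-2\gamma}$: there $10^3n/d^{1+\epsilon-\gamma}\leq 2000d^{-\gamma}\to 0$, so the ceiling is eventually $1$, and the ratio $d\ceiling{\cdot}/(\omega d)=1/\omega=1/2$, which does \emph{not} tend to $0$. The correct way to handle it is to split the ceiling as $\ceiling{10^3n/d^{1+\epsilon-\gamma}}\leq 10^3n/d^{1+\epsilon-\gamma}+1$, giving $\fvv{2}(u')\leq 10^3n/d^{\epsilon-\gamma}+d$, and then absorb the $+d$ separately via $\omega d-d\geq 0.5\omega d$ (using $\omega\geq 2$), exactly as the paper does. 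The remaining terms are then genuinely $o(\omega d)$, and $0.5\omega d-o(\omega d)\geq 0.4\omega d$ for $d$ large. So the conclusion survives, but the ``each tending to $0$'' justification as written would not.
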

\begin{proof}
Consider $u\in S_i$ for any fixed $1\leq i \leq 13$. By Claim \ref{claim:vidistinct}, $f_{12}(e)\geq f_1(e)$ for every edge $e$. Hence, as due to Lemma \ref{lem:main} (\ref{item:vS}), $\deg_B(u) \geq d-\sstar - 13d^{1/2+\gamma} \geq d - 2\sstar$, by the definition of $f_1$ and the fact that $\omega\leq \ceiling{n/d}$, 
\begin{align}
\fvv{12}(u)  \geq &  \left( i\omega+\ceiling{n/d} \right) (d-2\sstar) 
\geq i\omega d + \ceiling{n/d}d 
- 2\sstar (i+1) \ceiling{n/d}. 
\label{eq:fuup1}
\end{align}
By Claim \ref{claim:vidistinct}, for every $v \in B$, $\fvv{12}(v) \leq \fvv{12}(v_{|B|}) = |B| +d+  \left( 7\omega + \ceiling{n/d}-1  \right)\sstar +  \ceiling{250n/d^{1/2-\gamma}} < n + d+ 9\ceiling{n/d}\sstar$. 
Thus for $u \in S_i$ and $v \in B$, as $1\leq i\leq 13$ and $d\leq 0.5\omega d$, together with (\ref{eq:fuup1}),
\begin{align*}
 \fvv{12}(u)- \fvv{12}(v) 
\geq &  i\omega d - 2\sstar (i+1) \ceiling{n/d} -d- 9\ceiling{n/d}\sstar 
\geq 0.5\omega d - 37\ceiling{n/d}\sstar > 0, 
\end{align*} 
where the last inequality holds, as 
 $\omega d \gg (n/d) \sstar$ when $d \to \infty$. The first claim is thus proved. 

Consider now any $u'\in S_{i-1}$ for a given  $2\leq i \leq 13$.  
By Claim \ref{claim:vidistinct} and the definition of $f_1$, $f_{12}(e)\leq (i-1)\omega+\ceiling{n/d} + 10^3n/d^{1+\epsilon - \gamma}+ 1$ for every edge $e$ incident with $u'$. Thus
\begin{align}
 \fvv{12}(u')  \leq  &
\left( (i-1)\omega+\ceiling{n/d} + 10^3n/d^{1+\epsilon - \gamma}+ 1\right) d 
=  i\omega d + \ceiling{n/d}d +d - \omega d +  10^3n/d^{\epsilon - \gamma}. 
\label{eq:fulow1}
\end{align}
Thus combing (\ref{eq:fuup1}) and (\ref{eq:fulow1}), for $u' \in S_{i-1}$ and $u \in S_i$,  
as  $\omega d \gg n \sstar /d, \omega d \gg n/d^{\epsilon-\gamma}$ when $d \to \infty$,
\begin{align*}
& \fvv{12}(u) - \fvv{12}(u') \geq  & \omega d  -d -10^3n/d^{\epsilon - \gamma}  - 2\sstar (i+1) \ceiling{n/d}
\geq 0.5\omega d -10^3n/d^{\epsilon - \gamma}  - 28\ceiling{n/d}\sstar 
\geq 0.4\omega d. 
\end{align*}
\end{proof}
By Claims \ref{claim:f1} and \ref{claim:vidistinct}, as a summary, the following holds after Step 2. 
\begin{claim}\label{claim:step2weights}
For any edge $e \in E(B)$, $f_{12}(e) \in [1, \integer{n/d}+2]$. For any edge $e$ across $B$ and $S$, $f_{12}(e)\in [\ceiling{n/d}, \ceiling{n/d}+  13\omega  + \ceiling{10^3n/d^{1+\epsilon-\gamma}}]$. For any edge $e \in E(S)$, $f_{12}(e)=1$. 
\end{claim}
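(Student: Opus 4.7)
The plan is a straightforward accounting that merges the bounds established in Claims~\ref{claim:f1} and~\ref{claim:vidistinct}. Since $f_{12} = f_1 + f_2$ by definition, and since $E(G)$ partitions into the three classes $E(B)$, cross-edges between $B$ and $S$, and $E(S)$, I would argue class by class.

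For $e \in E(B)$ or $e \in E(S)$, Claim~\ref{claim:vidistinct} stipulates that $f_2$ is supported on cross-edges, so $f_2(e) = 0$ and hence $f_{12}(e) = f_1(e)$. The bounds $f_{12}(e) \in [1, \integer{n/d}+2]$ and $f_{12}(e) = 1$ then transfer verbatim from Claim~\ref{claim:f1}. For an edge $e$ across $B$ and $S$, I would combine the range $f_1(e) \in [\ceiling{n/d}, \ceiling{n/d}+13\omega]$ from Claim~\ref{claim:f1} with the trivial non-negativity $f_2(e) \geq 0$ and the uniform upper bound $f_2(e) \leq \|f_2\|_\infty \leq \ceiling{10^3 n/d^{1+\epsilon-\gamma}}$ from Claim~\ref{claim:vidistinct}, obtaining the stated interval for $f_{12}(e)$.

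There is essentially no obstacle: the claim is purely a bookkeeping summary of the two previous claims. The only detail worth double-checking is the non-negativity of $f_2$, which is automatic from the codomain $f_2:E(G)\to\mathbb{N}$ declared in Claim~\ref{claim:vidistinct}, whose construction there adds non-negative increments to cross-edges in order to compensate for the weight deficit identified in~(\ref{eq:fvsurplusmain}).
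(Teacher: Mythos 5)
Your proposal is correct and matches the paper exactly: the paper states this claim with only the remark ``By Claims~\ref{claim:f1} and~\ref{claim:vidistinct}, as a summary,'' which is precisely the edge-class bookkeeping you carry out. Your added note on the non-negativity of $f_2$ (from its codomain $\mathbb{N}$ and the fact that it is built by adding increments to cross-edges) is a correct and worthwhile detail to make explicit.
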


\paragraph{Step 3.} 
In this step, we introduce $f_3: E(G) \to \mathbb{N}$ 
that is only supported on $E(S)$ such that all vertices in $S$ have distinct weights with respect to $f=f_1+f_2 + f_3$. We will moreover show that Claim \ref{claim:buffer} implies that $f$ attributes distinct weights to all vertices in $G$.
For this aim we will adapt the algorithm from
~\cite{Pasy}, which was modeled on the idea of 
Kalkowski, Karo\'nski, and Pfender~\cite{KKP}. 

 
Let $\mathsf{AP}$ be the family of  sets of the following form $\sset=\{ (2\lambda) \ceiling{n/(3d)}+a, (2\lambda+1) \ceiling{n/(3d)}+a\}$ where $\lambda, a$ are integers with $\lambda \geq 0$, $a\in [0, \ceiling{n/(3d)}-1]$. 
Note the sets in $\mathsf{AP}$ with all possible values of $\lambda, a$ partition the non-negative integers, where different values of $a$ correspond to  $\ceiling{n/(3d)}$ different congruence classes, denoted by $\mathcal{C}_a = \{a+ k\ceiling{n/(3d)}, k \in \mathbb{N}\}$. 
Our primary goal is to attribute the weight of every vertex $v\in S$ to appropriately chosen $\sset_{v} \in \mathsf{AP}$ 
so that for each  $1 \leq l \leq 13$, vertices in $S_l$  have associated pairwise distinct $\sset_{v}$'s, which are thus disjoint.

We initialize $f_3$ by setting $f_3(e) = \ceiling{n/(3d)}$ for all $e\in E(S)$. 
Given an ordering $v_1, \dots, v_{|S|}$ of vertices in $S$ (specified later), each edge $\{v_i, v_j\}$ with $i<j$ is called a {\it forward edge} of $v_i$ and a  {\it backward edge} of $v_j$. The algorithm will sequentially processes $v_i$'s modifying $f_3$ on edges in $S$ incident to currently analyzed $v_i$. For  $v_1$ no modifications are needed -- we simply let $\sset_{v_1}$ be the set in $\mathsf{AP}$ that contains the current value of $\fv(v_1)$ and move on (to $v_2$). 
Then for every consecutive $i\geq 2$, we will choose a special set 
${\sset_{v_i}} \in \mathsf{AP}$ and guarantee that $\fv(v_i)$ belongs in $\sset_{v_i}$ since the end of step $i$ till the end of entire algorithm.
We admit 
two options to modify $f_3$ on backward edges of the given $v_i$:  either by adding $0$ or one of the values in $\{\pm \ceiling{n/(3d)}\}$. Specifically, say $\{v_i,u\}$ is a backward edge of $v_i$. If the current value of $\fv(u)$ is the smaller value in $\sset_u$, we admit adding $0$ or $\ceiling{n/(3d)}$ to $f_3(\{v_i,u\})$; if $\fv(u)$ is the larger value in $\sset_u$, we 
in turn admit subtracting $0$ or $\ceiling{n/(3d)}$ from  $f_3(\{v_i,u\})$.
Thereby the updated $\fv(u)$ will always remain in $\sset_u$, as desired. 
We finally admit adding any value in $\{0, 1, \dots,\ceiling{3n/d}\}$ to the weights $f_3(e)$ of all forward edges of $v_i$, which will in particular allow us to determine the congruence class $\fv(v_i)$ will eventually land in. 


We now specify the ordering $v_1, v_2, \dots$  of the vertices in $S$. At the beginning we arrange the vertices in $\bigcup_{1\leq i \leq 12}S_i$ according to the values of $X_{v_i}$, from the smallest to the largest, and thus consistently with the order of $S_1, \dots, S_{12}$. 
The last in the ordering are vertices from $S_{13}$, which are ordered differently due to some technical subtlety concerning vertices without forward edges.
Suppose $C_1, \dots, C_K$ are the connected components in $S_{13}$, ordered arbitrarily. Each component has at least two vertices by Lemma \ref{lem:main}(\ref{item:vSi}). For each $C_i$, we use reversed BFS to order its vertices and denote $r_i, t_i$ the last two vertices in $C_i$. (Thus $t_i$ is the root of the tree in BFS; $\{r_i, t_i\}\in E(S)$.) Let $R = \{r_1, \dots, r_K\}$ and $T = \{t_1, \dots, t_K\}$. We finally define the ordering in 
$S_{13}$ by concatenating the orderings of $C_1,\ldots,C_K$. Note that by Lemma \ref{lem:main}(\ref{item:vSi}), the set of {\it terminal vertices}, i.e.,  vertices with no forward edges in the obtained ordering in $S$, is $T$.

We now show specific procedures which will allows us to achieve the desired goal. Suppose we are in step $i$, i.e. we are analyzing $v_i \in S_t$, where $1 \leq t \leq 13$, and that $v_i \notin R\cup T$, hence $v_i$ has at least one forward edge, say $e_i$. 
The existing sets $\sset_{u}$ for $u$ prior to $v_i$ in $S_t$ correspond to at most $|S_t|$ congruence classes with possible duplicates. Therefore, there must be a congruence class $\mathcal{C}_a$ that includes at most
$|S_t|/\ceiling{n/(3d)}$ prior sets $\sset_u$ with $u \in S_t$.
Thus we may include the weight of $\fv(v_i)$ in $\mathcal{C}_a$ by adding one of admissible values in $\{0, 1, \dots,\ceiling{3n/d}\}$ to the weight $f_3(e_i)$. 
We then modify the rest of the forward edges of $v_i$ by adding $0$ or $\ceiling{n/(3d)}$ and 
 change the weights of some backward edges of $v_i$ by $\ceiling{n/(3d)}$ according to the specified rules, if necessary. 
Note that this way we may obtain $\deg_S(v_i)$ 
consecutive terms in $\mathcal{C}_a$ as weights of $v_i$. 
Since each prior set $\sset_u$ blocks at most two consecutive terms in $\mathcal{C}_a$, 
we can find
this way an attainable $\fv(v_i)\in\mathcal{C}_a$ which is not blocked if only 
 $\deg_S(v_i) > 2|S_t|/\ceiling{n/(3d)}$.
This is however implied by an even stronger inequality, which 
holds by Lemma \ref{lem:main} (\ref{item:CSi})(\ref{item:vS}):
\begin{align}
 4|S_t|/\ceiling{n/(3d)}+2 \leq & 4(\sstar n/(13d)+2n/d^{1/2-\gamma})/(n/(3d)) +2 \leq 12\sstar /13 + 24d^{1/2+\gamma}+2 
\nonumber\\
< & \sstar - 13d^{1/2+\gamma} \leq \deg_S(v_i).\label{eq:choice}
\end{align}
We finally set $\sset_{v_i}$ as the only set in $\mathsf{AP}$ containing the attained weight of $v_i$. 

We are left to show how to handle $r_j, t_j \in R \cup T$, where $\{r_j,t_j\}$ is the only forward edge of $r_j$. 
We analyze both vertices simultaneously in a similar manner as above. 
Recall $r_j, t_j \in S_{13}$. 
First,
by an averaging argument,  we can choose an admissible addition to $f_3(\{r_j, t_j\})$ such that the two new congruence classes of $\fv(r_j), \fv(t_j)$ each includes 
at most $2|S_{13}|/\ceiling{n/(3d)}$ prior sets $\sset_u$ with $u\in S_{13}$, disregarding temporarily  $\sset_{r_j}$ from the point of view of $t_j$. 
Next, analogously as above,  by~(\ref{eq:choice}), we can change 
the weights of backward edges of $r_j$ 
by $\pm\ceiling{n/(3d)}$ 
so that the resulting $\fv(r_j)$ belongs to $\sset_{r_j}\in \mathsf{AP}$  
disjoint from those of the prior vertices in $S_{13}$. 
Finally, we analogously adjust the weights of all backward edges of $t_j$ except $\{r_j, t_j\}$ 
so that the resulting $\fv(t_j)$ belongs to $\sset_{t_j}\in \mathsf{AP}$  
disjoint from those of the prior vertices in $S_{13}$ including $\sset_{r_j}$, which is again feasible by (\ref{eq:choice}) (where ``+2'' was incorporated in this inequality to facilitate distinguishing $\sset_{t_j}$ from $\sset_{r_j}$). 


\begin{claim}\label{claim:maxweight}
For every edge $e$ of $G$, $1\leq f(e)\leq \ceiling{n/d}+  13\omega  + \ceiling{10^3n/d^{1+\epsilon-\gamma}}$.
\end{claim}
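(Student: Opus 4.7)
The plan is a straightforward case analysis on the location of the endpoints of $e$ in the partition $V(G)=B\cup S$, using Claim \ref{claim:step2weights} to read off the contribution of $f_{12}=f_1+f_2$ and inspecting Step 3 to handle $f_3$. Since $f_3$ is supported on $E(S)$, we have $f(e)=f_{12}(e)$ whenever $e$ has at least one endpoint in $B$. For $e\in E(B)$, Claim \ref{claim:step2weights} gives $f(e)\in[1,\integer{n/d}+2]\subseteq[1,\ceiling{n/d}+2]$, which lies inside the claimed interval because $\omega\ge 2$ implies $13\omega\ge 26$. For $e$ between $B$ and $S$, the bound from Claim \ref{claim:step2weights} is exactly the asserted one, and the lower bound $f(e)\ge\ceiling{n/d}\ge 1$ is immediate.

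The only substantive case is $e\in E(S)$, where $f_{12}(e)=1$ and hence $f(e)=1+f_3(e)$. Here I would trace Step 3 directly: initially $f_3(e)=\ceiling{n/(3d)}$, and since $e=\{v_i,v_j\}$ with $i<j$ is touched at most twice during the algorithm, once as a forward edge of $v_i$ and once as a backward edge of $v_j$, it suffices to bound each modification. The backward modification is one of $\{0,\pm\ceiling{n/(3d)}\}$ by construction, while the forward modification, whether on a generic forward edge (adding $0$ or $\ceiling{n/(3d)}$) or on the designated $e_i$ used to place $\fv(v_i)$ into the correct residue class modulo $\ceiling{n/(3d)}$, is a non-negative shift of size at most $\ceiling{n/(3d)}$. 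Aggregating, one obtains $0\le f_3(e)\le 3\ceiling{n/(3d)}$, so $f(e)\ge 1$ and $f(e)\le 1+3\ceiling{n/(3d)}\le \ceiling{n/d}+4$; the slack $13\omega\ge 26$ in the claimed upper bound then easily absorbs this additive constant, yielding $f(e)\le \ceiling{n/d}+13\omega+\ceiling{10^3n/d^{1+\epsilon-\gamma}}$.

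The only care needed is to confirm the quantitative bound on the forward-edge modification of $e_i$: since the classes in $\mathsf{AP}$ are spaced $\ceiling{n/(3d)}$ apart, shifting $\fv(v_i)$ to the target residue class only requires changing $f_3(e_i)$ by a non-negative integer of magnitude at most $\ceiling{n/(3d)}$, which is consistent with the accounting above. There is no deeper obstacle here; the argument is purely bookkeeping of the construction from Steps 1 through 3, organized by the three edge types and then summing at most two controlled perturbations in the $E(S)$ case.
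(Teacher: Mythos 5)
Your proof is correct and follows essentially the same route as the paper: read off $f_{12}$ from Claim~\ref{claim:step2weights} by edge type, note $f_3$ is supported on $E(S)$ and each such edge is touched at most twice (once forward, once backward), arrive at $f_3(e)\in[0,3\ceiling{n/(3d)}]$, and absorb the extra $3\ceiling{n/(3d)}<n/d+3$ into the $13\omega\ge 26$ slack. Your explicit observation that the forward shift on $e_i$ never actually needs to exceed $\ceiling{n/(3d)}$ (despite the nominally larger admissible range stated in Step~3) is a worthwhile clarification that the paper's terse proof leaves implicit.
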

\begin{proof}
By Claim \ref{claim:step2weights} all edge weights were in the interval $[1, \ceiling{n/d}+  13\omega  + \ceiling{10^3n/d^{1+\epsilon-\gamma}}]$ prior to Step 3, while edges in $E(S)$ were assigned $1$.
Within Step 3 only edges in $E(S)$ could have been modified, each at most twice
(once as a forward edge and once as a backward edge). Thus  $f_3(e) \in [0, 3\ceiling{n/(3d)}]$, and hence the result follows. 
%
\end{proof}

\subsection{
Proof of Theorems \ref{thm:main2} and \ref{thm:main1}} 

Note that by the algorithm applied above, $\fv(v)$'s are pairwise distinct for vertices in the same $S_i$ for $1\leq i \leq 13$. 
We first show that if $u \in S_{i}$ and $u' \in S_{i-1}$, where $2\leq i \leq 13$, then $\fv(u) > \fv(u')$. By Claim \ref{claim:buffer},  $\fvv{12}(u) - \fvv{12}(u') \geq 0.4 \omega d$. 
Moreover, by the algorithm in Step 3, 
$\fvv{12}(u'')\leq \fvv{}(u'') \leq \fvv{12}(u'')+3\ceiling{n/(3d)} \deg_S(u'')$ for every $u''\in S$, as
$0 \leq f_3(e)\leq 3\ceiling{n/(3d)}$ for each $e\in E(S)$. Hence, by Lemma \ref{lem:main} (\ref{item:vS}),
since $\omega d \gg n\sstar/d$ as $d \to \infty$,
\begin{align*}
\fv(u) - \fv(u') \geq &
0.4 \omega d - 3\ceiling{n/(3d)} \deg_S(u') \geq 0.4 \omega d - 4(n/d)2\sstar > 0. 
\end{align*} 
Thus all vertices in $S$ have pairwise distinct weights. 
For any vertices $u\in S$ and $v\in B$, since $\fvv{3}(v)=0$ and $\fvv{3}(u) \geq 0$, by 
Claim \ref{claim:buffer}, 
$\fv(u) - \fv(v) > 0$. 
Finally, as by Claim~\ref{claim:vidistinct}, the weights of the vertices in $B$ formed a $|B|$-element segment of integers after Step 2 and have not changed ever since, all vertices in $G$ have distinct weights.

Suppose $d^{1+\epsilon-2 \gamma} \geq n$. Then $\omega=2$ and $\ceiling{10^3n/d^{1+\epsilon-\gamma}}=1$ for $d$ large enough. Thus  by Claim \ref{claim:maxweight}, there is $d_0$ such that $\| f\|_\infty \leq \ceiling{n/d}+27<n/d+28$ 
for $d\geq d_0$. 
As there are only finitely many graphs with  $d< d_0$ and $d^{1+\epsilon -2 \gamma} \geq n$ (while e.g. by~\cite{KKP}, $s(G)\leq 6 \ceiling{n/d}$),
Theorem \ref{thm:main1} and Corollary \ref{cor:main1} follow  due to 
taking $\beta = \epsilon - 2\gamma$, as 
$\epsilon\in (0,1/4)$ while $\gamma$ can be chosen arbitrarily small.

On the other hand, by Claim \ref{claim:maxweight},  $f$ is upper bounded by 
$\ceiling{n/d}+  13\omega  + \ceiling{10^3n/d^{1+\epsilon-\gamma}} 
< (n/d+1) +13(n/d^{1+\epsilon-2\gamma}+2)+ (n/d^{1+\epsilon-2\gamma}+1) = n/d(1+14/d^{\epsilon-2\gamma})+28$
 when $d$ is sufficiently large, say $d\geq d_0$ (where $d_0$ is a constant dependent on $\epsilon,\gamma$), hence Theorem~\ref{thm:main2} follows by taking $\beta = \epsilon - 2\gamma$ analogously as above. For $d< d_0$, we may finally again use the result in~\cite{KKP} 
implying that 
$s(G) < (n/d)(1+ 5) + 6$, where $5\leq C/d^\beta$ for large enough (in terms of $d_0$) constant $C$. Thus 
Corollary \ref{cor:main2} is proved as well. 

\section{Conclusion and remarks}
In this note, we proved a uniform upper bound $s(G) \leq \frac{n}{d}(1+C/d^{\beta}) + 28$, which 
confirms the Faudree-Lehel Conjecture for $d \geq n^{\alpha}$ 
for any fixed $\alpha>0.8$.
Our primary goal was to present a relatively short proof, hence we did not strive to optimize all constants and auxiliary functions within our argument. In particular, using a slightly more detailed analysis concerning $r_i,t_i$ in the algorithm in Step 3 (applied already e.g. in~\cite{KKP,MP}) and a few other minor alterations, one may easily reduce the constant $28$  to $16$ in all our main results (and the constant $14$ to $8$).

\paragraph{Acknowledgment.} 
The authors would like to thank Jacob Fox for helpful discussions.

\bibliographystyle{plain}
\bibliography{sample3}

\end{document}